\documentclass[12pt]{amsart}

\usepackage{latexsym,color,amsmath,amsthm,amssymb,amscd,amsfonts,dsfont,graphicx,comment}

\usepackage{multirow}
\usepackage{arydshln}
\usepackage{verbatim}

\usepackage{fullpage}


\newtheorem{theorem}{Theorem}[section]
\newtheorem{lemma}[theorem]{Lemma}
\newtheorem{prop}[theorem]{Proposition}
\theoremstyle{definition}
\newtheorem{defi}[theorem]{Definition}
\newtheorem{rem}[theorem]{Remark}

\newcommand{\R}{\mathbb{R}}
\newcommand{\Rn}{\mathbb{R}^n}
\newcommand{\B}{\mathbb{B}^n}
\newcommand{\emp}{\emptyset}
\newcommand{\N}{\mathbb{N}}
\newcommand{\ti}{\times}

\newcommand{\diag}{{\rm{diag}}}

\newcommand{\Ri}{\rightarrow}
\newcommand{\Sub}{\subseteq}
\newcommand{\Sup}{\supseteq}
\newcommand{\ifff}{\Leftrightarrow}

\newcommand{\rh}{\rho}
\newcommand{\la}{\lambda}
\newcommand{\ka}{\kappa}
\newcommand{\ez}{\epsilon}
\newcommand{\T}{\Theta}
\newcommand{\wt}{\widetilde}
\newcommand{\sz}{\sigma}


\def\r{\right}
\def\lf{\left}
\def\ls{\lesssim}
\def\fz{\infty}

\begin{document}

\title{A characterization of spaces of homogeneous type induced by continuous ellipsoid covers of $\R^n$}

\author{Marcin Bownik}
\address{Department of Mathematics, University of Oregon, Eugene, OR 97403--1222, USA}
\email{mbownik@uoregon.edu}

\author{Baode Li}
\address{College of Mathematics and System Science, Xinjiang University, Urumqi, 830046, P. R. China}
\email{baodeli@xju.edu.cn}

\author{Tal Weissblat}
\address{Einstein Institute of Mathematics,
Hebrew University of Jerusalem,
Givat Ram, 9190401, Israel}
\email{tal.weissblat@mail.huji.ac.il}

\keywords{space of homogeneous type, quasi-convex, quasi-distance, continuous ellipsoid cover}

\subjclass[2000]{
42B30, 
52A20, 52A30.  	
}

\thanks{Marcin Bownik was partially supported by NSF grant DMS-1956395.
Baode Li was supported by the National Natural Science Foundation of China (Grant No.
11861062).}
\date{\today}

\begin{abstract}
We study the relationship between the concept of a continuous ellipsoid $\Theta$ cover of $\mathbb{R}^n$, which was introduced by Dahmen, Dekel, and Petrushev \cite{DDP0, DDP, DP}, and the space of homogeneous type induced by $\Theta$. We characterize the class of quasi-distances on $\R^n$ (up to equivalence) which correspond to continuous ellipsoid covers. This places firmly continuous ellipsoid covers as a subclass of spaces of homogeneous type on $\R^n$ satisfying quasi-convexity and $1$-Ahlfors-regularity.
\end{abstract}

\maketitle

\section{Introduction}\label{s1}

Discrete and continuous ellipsoid covers of $\R^n$ were introduced by Dahmen, Dekel, and Petrushev in the construction and analysis of multilevel preconditioners for partition of unity methods applied to elliptic boundary value problems \cite{DDP0} and in the study of Besov spaces with pointwise variable anisotropy \cite{DDP, De}, see also the survey \cite{DP}. A  continuous ellipsoid cover consists of ellipsoids $\theta_{x,t}$ with centers $x\in\R^n$ and scales $t\in \R$ satisfying a natural shape condition. Dekel, Han, and Petrushev \cite{DHP} have shown that an ellipsoid cover $\Theta$ defines a space of homogeneous type in the sense of Coifman and Weiss \cite{cw71, cw77} with a quasi-distance $\rho_\Theta$ given by
\begin{equation}\label{ed}
\rho_\Theta(x,y):=\inf_{\theta\in\Theta}\lf\{|\theta|:x,y\in\theta\r\}.
\end{equation}
More precisely, $\R^n$ equipped with the Lebesgue measure and quasi-distance $\rho_\Theta$ is $1$-Ahlfors regular, i.e., Lebesgue measure of balls satisfy $|B_{\rho_\Theta}(x,r)| \sim r$ for all $x\in\R^n$ and $r>0$. Subsequently, Dekel, Petrushev, and Weissblat \cite{DPW} have developed the Hardy spaces $H^p(\Theta)$ associated with a continuous ellipsoid cover $\Theta$ for the entire range of $0<p \le 1$. Among the results shown in this setting are grand maximal function characterization, atomic decomposition, and classification of Hardy spaces \cite{DPW}, the duality of Hardy spaces \cite{dw}, molecular decomposition \cite{abr}, and boundedness of Calder\'on-Zygmund singular integral operators \cite{bll}. In contrast with the general theory of Hardy spaces on spaces of homogenous type \cite{am, cw77, hs}, these results work in the full range $0<p\leq 1$. This is actually the largest class of spaces of homogeneous type on $\R^n$ equipped with Lebesgue measure, where such complete $H^p$ theory has been developed so far.

A natural question arises about the relationship between ellipsoid covers and spaces of homogeneous type on $\R^n$. What quasi-distances on $\R^n$ are induced by continuous ellipsoid covers? In this paper we answer this question by characterizing all quasi-distances (up to equivalence) which correspond to continuous ellipsoid covers via the formula \eqref{ed}. In addition that $\rho$ is $1$-Ahlfors regular, we impose that $\rho$ is quasi-convex. That is, there exists a constant $Q\ge 1$ such that for every $x\in \R^n$ and $r>0$ there exists an ellipsoid $\xi=\xi_x^r$ with center $x$ such that
\begin{equation}\label{de}
\xi_x^r\Sub B_{\rho}(x,r) \Sub Q \cdot \xi_x^r,
\end{equation}
where $Q \cdot \xi= Q(\xi-x)+x$ is a dilate of an ellipsoid $\xi$ by a factor $Q$. The famous  maximal volume ellipsoid theorem of John \cite{ball, FJ, TV} attests that every convex body in $\R^n$ is $Q$-quasi-convex with $Q=n$. Hence, the above definition is a natural generalization of convexity reminiscent of the concept of a quasi-conformal mapping \cite{H}.

The main result of the paper shows that there is one-to-one correspondence between quasi-convex, $1$-Ahlfors-regular quasi-distances and continuous ellipsoid covers in $\R^n$. In this correspondence we identify equivalent quasi-distances and likewise equivalent ellipsoid covers. In other words, a quasi-convex, $1$-Ahlfors-regular quasi-distance $\rho$ gives rise to a continuous ellipsoid cover $\Xi=\{\xi_x^r: x\in \R^n, r>0\}$, where $\xi_x^r$ satisfies \eqref{de}. In turn, a quasi-distance $\rho_\Xi$, which is induced by $\Xi$ and given by \eqref{ed}, is  quasi-convex and $1$-Ahlfors-regular, and $\rho_\Xi$ is equivalent to $\rho$.

While the methods of the proof are quite elementary and require mostly basic properties of ellipsoids, some of them could not be found in the existing literature such as Theorem \ref{t2.x1}. The most demanding arguments revolve around the inner property which guarantees appropriate growth of balls $B_\rho(x,r)$ as $r\to \infty$. It turns out that this property is automatically implied by the quasi-convexity and $1$-Ahlfors-regularity of $\rho$. In turn, the inner property plays a key role in showing that $\Xi=\{\xi_x^r: x\in \R^n, r>0\}$ satisfies the shape condition, which is the key requirement for $\Xi$ to be a continuous ellipsoid cover.

This article is organized as follows. Section \ref{s2} is devoted to proving basic properties of ellipsoids such as Theorem \ref{t2.x1}. In Section \ref{s3} we introduce the notion of a continuous ellipsoid cover, recall some of its known properties and prove new ones. In Section \ref{s4} we study quasi-convexity and the inner property and show the main characterization result of the paper, Theorem 4.9. Finally, in Section \ref{s5} we give applications and examples of quasi-distances illustrating our main result.

\section{Ellipsoids in $\R^n$}\label{s2}

In this section we recall some basic properties of ellipsoids in $\R^n$.
An {\it ellipsoid} $\xi$ in $\Rn$ is an image of the closed Euclidean unit ball $\B$ in $\Rn$ under an affine map, i.e.,
$$\xi =M_\xi(\B)+c_\xi,$$ where $M_\xi$ is an $n\times n$ nonsingular matrix and $c_\xi\in\Rn$ is the center of ellipsoid $\xi$. For any ellipsoid $\xi$ and $\la>0$, define a dilated ellipsoid by $$\la \cdot \xi:=\la M_\xi(\B)+c_\xi.$$

The following elementary theorem shows that if one ellipsoid is contained in the other, then we have a reverse inclusion relation for a dilated ellipsoid.

\begin{theorem}\label{t2.x1}
If two ellipsoids $\eta$ and $\xi$ in $\Rn$ satisfy $\eta\Sub\xi$, then
$$\xi\Sub 2\frac{|\xi|}{|\eta|} \cdot \eta.$$
Moreover, if $\eta$ and $\xi$ have the same center, then the above holds without the factor $2$.
\end{theorem}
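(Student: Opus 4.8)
The plan is to reduce the general case to the concentric case by an affine change of variables, and then handle the concentric case directly. First I would apply the affine map $\Phi(v) = M_\eta^{-1}(v - c_\eta)$, which sends $\eta$ to the unit ball $\B$. Since $\Phi$ is affine it commutes with dilations about centers and preserves ratios of volumes, so it suffices to prove the statement when $\eta = \B$; write $\xi = M(\B) + c$ for the image of $\xi$. The hypothesis becomes $\B \subseteq \xi$, and we must show $\xi \subseteq 2|\xi| \cdot \B = B(0, 2|\xi|^{1/n}|\B|^{-1/n})$, i.e., that every point of $\xi$ has Euclidean norm at most $2 R$ where $R$ is the radius of the ball $|\xi|\cdot\B$.

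The key geometric observation is a bound on $|c|$, the distance from the center of $\xi$ to the origin. Let $a_1 \ge a_2 \ge \cdots \ge a_n > 0$ be the semi-axes of $\xi$ (singular values of $M$), so $|\xi| = |\B| \prod_i a_i$. From $\B \subseteq \xi$ I would extract two facts: first, every semi-axis satisfies $a_i \ge 1$ (the longest chord of $\xi$ in any direction is at least the diameter $2$ of $\B$, forcing $2a_1 \ge \cdots$; more carefully, projecting onto the $i$-th principal axis of $\xi$, the projection of $\B$ is an interval of length $2$ contained in one of length $2a_i$, so $a_i \ge 1$); second, $c \in \xi$ and moreover a segment of $\B$ through the origin gives control — in fact the point $c$ lies in $\xi$ and also $0 \in \xi$, so the whole segment from $0$ is inside $\xi$ extended, giving $|c| \le a_1 + 1 \le 2a_1$ after a short argument using that $\xi$ contains $\B \ni 0$ and hence contains points at distance $1$ from $0$ in the direction of $c$. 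Then any point $w \in \xi$ satisfies $|w| \le |c| + a_1 \le 3a_1$; combined with $|\xi| \ge |\B| a_1$ (since $a_i \ge 1$ for $i \ge 2$), this would give $|w| \le 3|\xi|/|\B|$. To sharpen the constant to $2$ one should be more careful: note $0 \in \xi$ means $|M^{-1}c| \le 1$, so $c$ lies in $\xi$ relative to its own center with $|c| \le a_1$; then for $w \in \xi$, $|w - c| \le a_1$ so $|w| \le 2a_1 \le 2|\xi|/|\B|$, and since $R = |\xi|^{1/n}/|\B|^{1/n} \ge$ ... — here one uses $a_1 \ge a_i$ to get $a_1^n \ge \prod a_i = |\xi|/|\B|$, hence $a_1 \ge (|\xi|/|\B|)^{1/n} = R$, which is the wrong direction. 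So instead I would use $|\xi|/|\B| = \prod a_i \ge a_1$ (as $a_i \ge 1$), giving $|w| \le 2a_1 \le 2|\xi|/|\B|$, and observe $2|\xi|\cdot\B$ has radius $2|\xi|/|\B|$ in the normalization where we measure the dilation factor against $\eta=\B$ with $|\B|$ absorbed — this is exactly the claimed inclusion $\xi \subseteq 2\tfrac{|\xi|}{|\eta|}\cdot\eta$ after unwinding $|\eta| = |\B|$.

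For the concentric case, take $c = 0$ so $\xi = M(\B)$ with semi-axes $a_i \ge 1$. Then $\xi \subseteq a_1 \cdot \B$, and $a_1 \le \prod_i a_i = |\xi|/|\B| = |\xi|/|\eta|$, so $\xi \subseteq \frac{|\xi|}{|\eta|}\cdot\eta$ with no factor $2$, as claimed.

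The main obstacle is the bound $|c| \le a_1$ (equivalently, controlling how far off-center $\xi$ can sit while still containing $\eta$); it rests on the fact that $\B \subseteq \xi$ forces $c_\eta = 0$ to lie in the interior region of $\xi$ at parameter $\le 1$, i.e. $|M^{-1}(0 - c)| = |M^{-1}c| \le 1$, and translating this back through the largest semi-axis gives $|c| \le a_1$. Getting the clean constant $2$ (rather than $3$) hinges on this being sharp, and on not wasting a factor when passing between the operator-norm radius $a_1$ and the geometric-mean radius $(|\xi|/|\B|)^{1/n}$ — the resolution being that one should compare $a_1$ with the \emph{product} $\prod a_i$, legitimate precisely because each $a_i \ge 1$.
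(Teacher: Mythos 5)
Your proposal is correct and follows essentially the same strategy as the paper: normalize $\eta$ to the unit ball, show that $\B\subseteq\xi$ forces every semi-axis of $\xi$ to be $\ge 1$, bound the displacement of the center by the largest semi-axis $a_1$, and compare $a_1$ with $\prod_i a_i = |\xi|/|\eta|$ (valid since each $a_i\ge 1$). The paper reaches the semi-axis bound via the singular value decomposition of $M_\eta^{-1}M_\xi$, packaged into three preparatory lemmas, whereas your projection argument delivers the same fact somewhat more directly.
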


Since we could not find Theorem \ref{t2.x1} in the literature, we will give its proof using three more elementary lemmas.

\begin{lemma} \label{l1.1}
Let $D:=\diag(\la_1, \la_2,\dots, \la_n)$ be a diagonal matrix. If $\B\Sub D(\B)+c$
with $c\in\Rn$, then $\B\Sub D(\B)$.
\end{lemma}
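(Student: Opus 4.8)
My plan is to test the hypothesis one coordinate at a time, exploiting that $D(\B)+c$ is an axis-aligned ellipsoid. For each $i\in\{1,\dots,n\}$ I would apply the orthogonal projection $\pi_i\colon\R^n\to\R$ onto the $i$-th coordinate to both sides of $\B\Sub D(\B)+c$. Since projections preserve inclusions, $\pi_i(\B)=[-1,1]$, and $\pi_i(D(\B)+c)=[c_i-|\la_i|,\,c_i+|\la_i|]$ (the factor $\la_i$ scales the $i$-th axis, hence the absolute value, and $c$ translates), this yields
$$[-1,1]\Sub[c_i-|\la_i|,\,c_i+|\la_i|],$$
that is, $c_i-|\la_i|\le-1$ and $c_i+|\la_i|\ge1$, equivalently $|\la_i|\ge1+c_i$ and $|\la_i|\ge1-c_i$, hence $|\la_i|\ge1+|c_i|\ge1$ for every $i$.

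Once this is in hand the conclusion is immediate: since every $|\la_i|\ge1$, the matrix $D$ is nonsingular and $D(\B)=\{x\in\R^n:\sum_{i=1}^n x_i^2/\la_i^2\le1\}$, so for any $x=(x_1,\dots,x_n)\in\B$ one has $\sum_{i=1}^n x_i^2/\la_i^2\le\sum_{i=1}^n x_i^2\le1$, i.e.\ $x\in D(\B)$. Therefore $\B\Sub D(\B)$, as claimed.

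I do not expect a genuine obstacle here; the only small points to watch are to extract $|\la_i|\ge1$ (in particular $\la_i\neq0$) \emph{before} invoking the ellipsoid inequality for $D(\B)$, and to justify that the coordinate projection of $D(\B)+c$ is precisely the interval above. If one prefers a more invariant packaging, the same pair of inequalities falls out of the support-function identity $h_{D(\B)+c}(\pm e_i)=|\la_i|\pm c_i$ together with $h_{\B}(\pm e_i)=1$ and the monotonicity of support functions under inclusion.
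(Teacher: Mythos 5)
Your proof is correct and takes essentially the same approach as the paper: both arguments compare the $i$-th coordinate extremes of $\B$ against the $i$-th coordinate bounds of $D(\B)+c$ to deduce $|\la_i|\ge 1$ for each $i$, and then observe that this forces $\B\Sub D(\B)$. You phrase it via coordinate projections (and obtain the slightly sharper $|\la_i|\ge 1+|c_i|$), while the paper simply tests the points $\pm e_i$; these are the same idea.
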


\begin{proof}
We only need to verify  $|\la_i|\ge 1$ for any $i= 1,\dots, n$.
Let $e_1,\ldots,e_n$ be a standard basis of $\R^n$.
Note that
\[
D(\B)\Sub \{ x=(x_1,\ldots, x_n) \in \R^n: |x_i| \le |\lambda_i| \quad\text{for all }i=1,\ldots,n\}.
\]
If $\B -c \Sub D(\B)$, then the absolute value of $i$'th coordinate of $e_i - c$ or $-e_i-c$ is $\ge 1$. Hence, by the above inclusion we have $ |\lambda_i|\ge 1$ for every $i=1,\ldots,n$.
\end{proof}

\begin{lemma} \label{l1.2}
Let $A$ be a nonsigular matrix. Let $\{\la_i\}_{i=1}^n$ be the eigenvalues of $AA^T$ and $D:=\diag(\sqrt{\la_1}, \sqrt{\la_2},\dots, \sqrt{\la_n})$.
If  $\B\Sub A(\B)+c$ with
$c\in\Rn$, then  there exists an orthogonal matrix $U$ such that $\B\Sub UA(\B)=D(\B)$. In particular, $\lambda_i \ge 1$ for all $i=1,\ldots,n$.
\end{lemma}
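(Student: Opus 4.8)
The plan is to reduce the statement to Lemma~\ref{l1.1} by an orthogonal change of coordinates furnished by the spectral theorem. Since $A$ is nonsingular, $AA^T$ is symmetric and positive definite, so its eigenvalues $\la_1,\dots,\la_n$ are all positive and there is an orthogonal matrix $O$ with $AA^T=OD^2O^T$, where $D=\diag(\sqrt{\la_1},\dots,\sqrt{\la_n})$ is invertible. Set $U:=O^T$.

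First I would verify the identity $UA(\B)=D(\B)$. Putting $W:=D^{-1}UA$, one computes $WW^T=D^{-1}(UAA^TU^T)D^{-1}=D^{-1}(O^TAA^TO)D^{-1}=D^{-1}D^2D^{-1}=I$, so $W$ is orthogonal. Hence $UA=DW$, and since $W(\B)=\B$ this yields $UA(\B)=D(W(\B))=D(\B)$. This is just the familiar fact that two origin-centered ellipsoids $M(\B)$ and $N(\B)$ coincide exactly when $MM^T=NN^T$, made explicit here by the matrix $W$ relating $UA$ and $D$.

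Next I would transfer the hypothesis through $U$. Applying the linear map $U$ to the inclusion $\B\Sub A(\B)+c$ and using that $U$ is orthogonal, so $U(\B)=\B$, gives $\B=U(\B)\Sub UA(\B)+Uc=D(\B)+Uc$. Now Lemma~\ref{l1.1}, applied to the diagonal matrix $D$ and the vector $Uc\in\Rn$, yields $\B\Sub D(\B)$; since $D(\B)=UA(\B)$ this is precisely the asserted inclusion $\B\Sub UA(\B)=D(\B)$. Finally, $\B\Sub D(\B)$ forces $D^{-1}e_i=\la_i^{-1/2}e_i\in\B$ for every $i$, i.e.\ $\la_i\ge 1$. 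The argument is entirely routine; the only step that merits attention is the identity $UA(\B)=D(\B)$, and I foresee no genuine obstacle.
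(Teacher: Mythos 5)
Your proof is correct and follows essentially the same route as the paper: diagonalize $AA^T$ by an orthogonal $U$, establish $UA(\B)=D(\B)$, transfer the inclusion through $U$, and finish with Lemma~\ref{l1.1}. The only cosmetic difference is how you justify $UA(\B)=D(\B)$ --- you exhibit the orthogonal matrix $W=D^{-1}UA$ and use $W(\B)=\B$, whereas the paper rewrites both sets as sublevel sets of the quadratic form $x^T(D^2)^{-1}x$; both are standard and equally valid. (Incidentally, the paper's proof contains a typo, citing Lemma~\ref{l1.2} where Lemma~\ref{l1.1} is meant; you cite the correct one.)
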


\begin{proof}
Since $AA^T$ is a positive
symmetric matrix, then there exists  an orthogonal matrix $U$ such that $UAA^TU^T=UA(UA)^T=D^2$. Therefore,
\begin{align*}
D(\B)&=\{Dx\in\R^n: x^Tx\le 1\}=\{x\in\R^n: x^T(D^2)^{-1}x\le 1\}\\
&=\{x\in\R^n: x^T(UA(UA)^T)^{-1}x\le 1\}=UA(\B).
\end{align*}
Since $\B\Sub A(\B)+c$, we have
$$\B=U(\B)\Sub UA(\B)+Uc=D(\B)+Uc.$$
Hence, by Lemma \ref{l1.2}, we have $\B\Sub UA(\B)=D(\B)$.
\end{proof}

\begin{lemma} \label{Lemma for help 1}
If two ellipsoids satisfy $\eta\Sub \xi$, then  $\eta-c_\eta\Sub \xi-c_\xi$, where $c_\eta$ and $c_\xi$ are centers of $\eta$ and $\xi$, resp.
\end{lemma}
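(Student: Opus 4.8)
The plan is to reduce to the case where $\xi$ is the unit ball $\mathbb{B}^n$ by applying the inverse of the affine map defining $\xi$, and then use the symmetry of the ball about its center. Write $\xi = M_\xi(\mathbb{B}^n) + c_\xi$ and $\eta = M_\eta(\mathbb{B}^n) + c_\eta$. Consider the affine map $\Phi(z) = M_\xi^{-1}(z - c_\xi)$, which sends $\xi$ to $\mathbb{B}^n$ (centered at the origin) and sends $\eta$ to the ellipsoid $\eta' := \Phi(\eta) = M_\xi^{-1} M_\eta(\mathbb{B}^n) + M_\xi^{-1}(c_\eta - c_\xi)$, whose center is $c_{\eta'} = M_\xi^{-1}(c_\eta - c_\xi)$. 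Since affine maps preserve inclusions, $\eta \Sub \xi$ gives $\eta' \Sub \mathbb{B}^n$.

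Next I would exploit the central symmetry of the ball: since $\mathbb{B}^n = -\mathbb{B}^n$ and $\eta' \Sub \mathbb{B}^n$, also $-\eta' \Sub \mathbb{B}^n$, and hence the ellipsoid $\eta' - c_{\eta'}$ (which has center at the origin) satisfies $\eta' - c_{\eta'} \Sub \mathbb{B}^n$. Indeed, $\eta' - c_{\eta'} = M_\xi^{-1}M_\eta(\mathbb{B}^n)$, so any point of it is of the form $(v - c_{\eta'})$ with $v \in \eta'$; writing the midpoint argument, $v - c_{\eta'} = \tfrac12\big(v + (2c_{\eta'} - v)\big)$ where both $v$ and $2c_{\eta'} - v = c_{\eta'} - (v - c_{\eta'})$ lie in $\eta'$ (the latter by central symmetry of $\eta'$ about $c_{\eta'}$), hence in $\mathbb{B}^n$, and $\mathbb{B}^n$ is convex, so $v - c_{\eta'} \in \mathbb{B}^n$. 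Thus $\eta' - c_{\eta'} \Sub \mathbb{B}^n = \mathbb{B}^n - 0$, which is exactly the claim $\eta - c_\eta \Sub \xi - c_\xi$ transported back through $\Phi$, since $\Phi$ is affine and linear on differences: $\Phi$ maps $\eta - c_\eta$ linearly (by $M_\xi^{-1}$) onto $\eta' - c_{\eta'}$ and $\xi - c_\xi$ onto $\mathbb{B}^n - 0$.

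I do not expect a serious obstacle here; this is a purely linear-algebraic/convexity observation. The only point requiring a little care is making sure that translating to the center commutes correctly with the affine change of variables — i.e. that $c_{\eta'} = \Phi(c_\eta)$, which holds because $\Phi$ is affine so it maps the center (the unique point of central symmetry) of $\eta$ to the center of $\Phi(\eta)$. Once that bookkeeping is in place, central symmetry of $\mathbb{B}^n$ plus convexity closes the argument, and applying $\Phi^{-1}$ (again affine, linear on differences) gives $\eta - c_\eta \Sub \xi - c_\xi$ directly.
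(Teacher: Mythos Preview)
Your overall strategy is sound and in fact more elementary than the paper's, but there is a computational slip in the midpoint step. You write
\[
v - c_{\eta'} = \tfrac12\big(v + (2c_{\eta'} - v)\big),
\]
yet the right-hand side equals $c_{\eta'}$, not $v - c_{\eta'}$. The correct identity uses the reflection through the origin that you already set up: since $2c_{\eta'} - v \in \eta' \Sub \B$, central symmetry of $\B$ gives $-(2c_{\eta'} - v) = v - 2c_{\eta'} \in \B$, and then
\[
v - c_{\eta'} = \tfrac12\big(v + (v - 2c_{\eta'})\big)
\]
is a genuine convex combination of two points of $\B$, hence lies in $\B$. With this fix the argument goes through.

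Compared to the paper: the paper normalizes the \emph{inner} ellipsoid $\eta$ to be the ball, obtaining $\B \Sub A(\B) + c$ with $A = (M_\eta)^{-1}M_\xi$, and then invokes Lemmas~\ref{l1.1} and~\ref{l1.2}, which diagonalize $AA^T$ to show each singular value of $A$ is at least $1$ and hence $\B \Sub A(\B)$. Your route normalizes the \emph{outer} ellipsoid $\xi$ to be the ball and then uses only convexity and central symmetry of $\B$, entirely bypassing the spectral/diagonalization machinery. Your argument is shorter and more elementary; the paper's route has the side benefit of yielding the singular-value information needed immediately afterward in the proof of Theorem~\ref{t2.x1}.
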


\begin{proof}
Without loss of generality, we can assume that $c_\eta= 0$ by using translations. Let
$$\eta:=M_\eta(\B),\qquad \xi:=M_\xi(\B)+c_\xi,$$
for some nonsingular matrices $M_\eta$ and $M_\xi$.
Since
$$\B =(M_\eta)^{-1}\eta \Sub  (M_\eta)^{-1}\xi= (M_\eta)^{-1}M_\xi \B + (M_\eta)^{-1}c_\xi,$$
by Lemma \ref{l1.2} we have $\B  \Sub (M_\eta)^{-1}M_\xi \B$, which yields the required conclusion.
\end{proof}

\begin{proof}[Proof of Theorem \ref{t2.x1}]
Take any two ellipsoids $\eta:=M_\eta(\B)+c_\eta\Sub\xi:=M_\xi(\B)+c_\xi$. Without loss of generality, we may assume that $c_\eta= 0$ by using translations.
Let $A=(M_\eta)^{-1}M_\xi$. By Lemma \ref{Lemma for help 1}, we
have
\begin{align}\label{e2.x3}
\B = (M_\eta)^{-1}(\eta)\Sub (M_\eta)^{-1}(\xi-c_\xi) = A\B.
\end{align}
Let $D:=\diag(\sqrt{\la_1}, \sqrt{\la_2},\dots, \sqrt{\la_n})$,
where $\{\la_i\}_{i=1}^n$ are the eigenvalues of $AA^T$.
By Lemma \ref{l1.2}, there exits an orthogonal matrix $U$ such that $UA(\B)=D(\B)$ and hence
\begin{equation}\label{e2.x1}
 U^{-1}D(\B)=(M_\eta)^{-1}(\xi-c_\xi).
\end{equation}
Since $|\det U|=1$ and $\la_i\ge1$ for all $i= 1,\ldots, n$, we have
\begin{equation}\label{e1.1}
\frac{|\xi|}{|\eta|}=\frac{|(M_\eta)^{-1}(\xi-c_\xi)|}{|(M_\eta)^{-1}(\eta)|}
=\frac{|U^{-1}(D(\B))|}{|\B|}
=\prod_{i=1}^n\sqrt{\la_i}\ge \max_{1\le i\le n}\sqrt{\la_i}.
\end{equation}
Therefore, by \eqref{e2.x1} and \eqref{e1.1} we obtain
\begin{equation}\label{e1.2}
(M_\eta)^{-1}(\xi-c_\xi)=U^{-1}D(\B)\Sub U^{-1}\max_{1\le i\le n}\sqrt{\la_i} \B=\max_{1\le i\le n}\sqrt{\la_i} \B\Sub \frac{|\xi|}{|\eta|}(M_\eta)^{-1}(\eta).
\end{equation}

Moreover, using the assumption $\eta\Sub \xi$, we get
$$\B=(M_\eta)^{-1}(\eta)\Sub (M_\eta)^{-1}(\xi)=(M_\eta)^{-1}(M_\xi(\B)+c_\xi).$$
By this and $UA (\B)=D(\B)$, we have
$$ \B=U(\B)\Sub U(M_\eta)^{-1}(M_\xi(\B)+c_\xi)=D(\B) +U(M_\eta)^{-1} c_\xi.$$
This implies that
$\B-U(M_\eta)^{-1} c_\xi\Sub D(\B)$
and hence
$$U(M_\eta)^{-1} c_\xi\in -D(\B)\Sub -\max_{1\le i\le n}\sqrt{\la_i} \B=\max_{1\le i\le n}\sqrt{\la_i} \B.$$
Combining this with \eqref{e1.1} yields
$$ (M_\eta)^{-1} c_\xi\in \max_{1\le i\le n}\sqrt{\la_i} \B \Sub \frac{|\xi|}{|\eta|}(M_\eta)^{-1}(\eta). $$
Hence, by \eqref{e1.2} we have
\begin{equation*}
(M_\eta)^{-1}(\xi)\Sub\frac{|\xi|}{|\eta|}(M_\eta)^{-1}(\eta)+(M_\eta)^{-1}c_\xi\Sub 2\frac{|\xi|}{|\eta|}(M_\eta)^{-1}(\eta).
\end{equation*}
Applying  $M_\eta$ to both sides we finally
obtain $\xi\Sub  2\frac{|\xi|}{|\eta|} \cdot \eta$.

Finally, if $\eta\Sub\xi$ have the same center, then we may assume that $c_\eta=c_\xi=0$. Hence, \eqref{e1.2} alone implies that $\xi\Sub \frac{|\xi|}{|\eta|}\eta$.\end{proof}


\section{Ellipsoid covers and quasi-distances on $\R^n$}\label{s3}

In this section we recall the properties of a continuous ellipsoid cover $\Theta$, which was originally introduced by Dahmen, Dekel, and Petrushev \cite{DDP}. This includes properties of quasi-distance $\rho_\Theta$ which is induced by the cover $\Theta$. Moreover, we translate the shape condition of $\Theta$ into a geometric form involving only containment of dilates of ellipsoids in $\Theta$.

\begin{defi}\label{d3.x1}
We say that
$$\T:=\{ \theta_{x,t}: x\in\Rn,t\in\R\}$$
is a {\it continuous ellipsoid cover } of $\R^n$, or shortly a  cover,
if there exist positive constants ${\mathbf p}(\T):=\{a_1,\ldots, a_6\}$ such that:
\begin{itemize}
\item[(i)]
For every $x\in \R^n$ and $t\in \R$, there exists an ellipsoid $\theta_{x,t}:=M_{x,t}(\B)+x$, where $M_{x,t}$ is a real $n\times n$ nonsingular matrix, satisfying
\begin{eqnarray}\label{volum}
a_12^{-t}\leq|\theta_{x,t}|\leq a_2 2^{-t}.
\end{eqnarray}
\item[(ii)]
Intersecting ellipsoids in $\T$ satisfy the {\it shape condition} requiring that for any $x,y\in \Rn$, $t\in \R$ and $s\ge0$, if $\theta_{x,t}\cap \theta_{y,t+s}\ne \emp $, then
\begin{equation}
\label{e3.y6}
a_3 2^{-a_4 s}\le 1 / \| (M_{y,t+s})^{-1} M_{x,t}\|
\le \|(M_{x,t})^{-1} M_{y,t+s}\|
\le a_5 2^{-a_6 s}.
\end{equation}
Here, $\|\cdot \|$ is the matrix norm given by $\|A\|:=\max_{|x|=1}|Ax|$ for any nonsingular matrix $A$.
\end{itemize}
\end{defi}

It is worth emphasizing that we do not assume any measurability or continuity condition on a continuous ellipsoid cover $\Theta$. Indeed, by \cite[Theorem 2.2]{bll} there exists an equivalent ellipsoid cover such that its corresponding matrix valued function $x \mapsto M_{x,t}$ is continuous for any $t\in \R$.

\begin{rem}\label{r3.y1}
The  shape condition (ii) in Definition \ref{d3.x1} has the following equivalent formulation by reversing scales. For any $x,y\in \Rn$, $t\in \R$ and $s\ge0$, if $\theta_{x,t}\cap \theta_{y,t-s}\ne \emp $, then
    \begin{eqnarray}\label{e3.w1}
    \frac1{a_5} 2^{a_6 s}\le 1 / \| (M_{y,t-s})^{-1} M_{x,t}\|
\le \|(M_{x,t})^{-1} M_{y,t-s}\|
\le \frac1{a_3} 2^{a_4 s}.
    \end{eqnarray}
Indeed, \eqref{e3.w1} follows from \eqref{e3.y6} applied to  $\theta_{y,t-s}$ and $\theta_{x,t}$ in place of $\theta_{x,t}$ and  $\theta_{y,t+s} $, resp. Reversing this argument, shows the converse implication.
\end{rem}

The  shape condition \eqref{e3.y6} can be also equivalently restated in terms of  dilates of the ellipsoids in $\T$ without referring to scale parameter $t$.

\begin{lemma}\label{geo}
Let $\T=\{ \theta_{x,t}: x\in\Rn,t\in\R\}$ be a collection of ellipsoids satisfying \eqref{volum}. Then, the shape condition \eqref{e3.y6} holds if and only if there exists constants $a_3'$ and $a_5'$ such that
for any two ellipsoids $\xi$, $\eta\in\T$, if $|\eta|\le |\xi|$ and $\xi\cap\eta\ne\emptyset$, then
\begin{equation}\label{e3.y1}
a_3'\bigg(\frac{|\eta|}{|\xi|}\bigg)^{a_4} (\xi-c_\xi) \Sub \eta-c_\eta
\Sub a_5' \bigg(\frac{|\eta|}{|\xi|} \bigg)^{a_6} (\xi-c_\xi),
\end{equation}
where  $c_\xi$ and $c_\eta$ are the centers of $\xi$ and $\eta$, resp.
\end{lemma}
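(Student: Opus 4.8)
The plan is to translate the matrix-norm inequalities in the shape condition \eqref{e3.y6} into containment relations between dilated ellipsoids, and vice versa. The key observation is that for an ellipsoid $\theta = M(\B)+c$ and a ball $\lambda\B$, the containment $\lambda \B \Sub M(\B)$ is controlled by the smallest singular value of $M$, while $M(\B) \Sub \lambda \B$ is controlled by the operator norm $\|M\|$. More precisely, for nonsingular matrices $A$, we have $A(\B) \Sub \|A\| \cdot \B$ and $\|A^{-1}\|^{-1}\cdot \B \Sub A(\B)$, and both inclusions are sharp. Composing such relations for $A = (M_\xi)^{-1}M_\eta$ will let us pass between statements about $\|(M_\xi)^{-1}M_\eta\|$, $\|(M_\eta)^{-1}M_\xi\|$ and statements about the nested (centered) ellipsoids $\eta - c_\eta$ and $\xi - c_\xi$.

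First I would fix two ellipsoids $\xi = \theta_{x,t}$ and $\eta = \theta_{y,t+s}$ in $\T$ with $|\eta| \le |\xi|$ (equivalently $s \ge 0$, up to adjusting constants using \eqref{volum} and Remark \ref{r3.y1}), and write $\eta - c_\eta = M_\eta(\B)$, $\xi - c_\xi = M_\xi(\B)$. Then $\eta - c_\eta \Sub \alpha(\xi - c_\xi)$ is equivalent to $(M_\xi)^{-1}M_\eta(\B) \Sub \alpha \B$, i.e., to $\|(M_\xi)^{-1}M_\eta\| \le \alpha$; and $\beta(\xi - c_\xi) \Sub \eta - c_\eta$ is equivalent to $\B \Sub \tfrac1\beta (M_\xi)^{-1}M_\eta(\B)$, i.e., to $\|(M_\eta)^{-1}M_\xi\| \le 1/\beta$. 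So \eqref{e3.y1} is literally the statement
\[
\|(M_\xi)^{-1}M_\eta\| \le a_5'\Big(\tfrac{|\eta|}{|\xi|}\Big)^{a_6}, \qquad \|(M_\eta)^{-1}M_\xi\| \le \tfrac1{a_3'}\Big(\tfrac{|\xi|}{|\eta|}\Big)^{a_4}.
\]
Comparing with \eqref{e3.y6}, rewritten as $\|(M_\xi)^{-1}M_\eta\| \le a_5 2^{-a_6 s}$ and $\|(M_\eta)^{-1}M_\xi\| \le \tfrac1{a_3}2^{a_4 s}$, the equivalence reduces to the comparability of $2^{-s}$ with $|\eta|/|\xi|$. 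This comes directly from the volume bounds \eqref{volum}: since $a_1 2^{-t-s} \le |\eta| \le a_2 2^{-t-s}$ and $a_1 2^{-t} \le |\xi| \le a_2 2^{-t}$, we get $\tfrac{a_1}{a_2} 2^{-s} \le |\eta|/|\xi| \le \tfrac{a_2}{a_1} 2^{-s}$, so $2^{-s}$ and $|\eta|/|\xi|$ differ by a bounded multiplicative factor, and raising to the fixed powers $a_4, a_6$ only changes constants. One also needs to note that the hypothesis $\theta_{x,t}\cap\theta_{y,t+s}\ne\emp$ in \eqref{e3.y6} is the same as $\xi \cap \eta \ne \emp$ in \eqref{e3.y1}, and that an arbitrary pair of intersecting ellipsoids in $\T$ with $|\eta|\le|\xi|$ can be written in the form $\theta_{y,t+s}$, $\theta_{x,t}$ with $s\ge 0$ by choosing $t$ with $2^{-t}\sim|\xi|$ (again using \eqref{volum}), so the two quantifications match up.

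I expect the main (though still modest) obstacle to be bookkeeping: carefully matching the ranges of the quantifiers — in one direction $\T$ indexes ellipsoids by $(x,t)$ with the scale $t$ built in, while in the other direction \eqref{e3.y1} quantifies over arbitrary pairs of ellipsoids in $\T$ — and tracking how all six structural constants transform, so that $a_4$ and $a_6$ are genuinely preserved while $a_3, a_5$ (respectively $a_3', a_5'$) absorb the ratios $a_2/a_1$ and its powers. Once the dictionary ``$\eta - c_\eta \Sub \alpha(\xi-c_\xi) \iff \|(M_\xi)^{-1}M_\eta\|\le\alpha$'' is in place and the volume comparison $2^{-s} \sim |\eta|/|\xi|$ is recorded, both implications follow by direct substitution, with no further geometry needed.
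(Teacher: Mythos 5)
Your dictionary --- translating $\eta-c_\eta\Sub\alpha(\xi-c_\xi)$ into $\|(M_\xi)^{-1}M_\eta\|\le\alpha$ and $\beta(\xi-c_\xi)\Sub\eta-c_\eta$ into $\|(M_\eta)^{-1}M_\xi\|\le 1/\beta$ --- combined with the comparison $2^{-s}\sim|\eta|/|\xi|$ from \eqref{volum}, is exactly the mechanism of the paper's proof, so your approach is substantially the same. The one place the plan under-describes the argument is the claim that a pair with $|\eta|\le|\xi|$ ``can be written in the form $\theta_{y,t+s}$, $\theta_{x,t}$ with $s\ge 0$ by choosing $t$.'' Once $\xi$ and $\eta$ are fixed members of $\T$, the value of $s$ is forced, and $|\eta|\le|\xi|$ only yields $s\ge -s_0$ with $s_0=\log_2(a_2/a_1)$; conversely $s\ge 0$ only yields $|\eta|\le(a_2/a_1)|\xi|$. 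So the two hypotheses cut out genuinely different (overlapping) sets of pairs, and no re-indexing will line them up. What the paper actually does --- and what you implicitly gesture at by citing Remark~\ref{r3.y1} and speaking of ``adjusting constants'' --- is prove an extended version of each side: the shape condition \eqref{e3.y6} is first shown to hold on the larger range $s\ge -s_0$, where the strip $-s_0\le s<0$ is treated via the reversed inequality of Remark~\ref{r3.y1} followed by a re-exponentiation that uses $a_6\le a_4$ and the boundedness of the strip to swap the exponents $a_4$ and $a_6$ at the cost of a factor $2^{(a_4-a_6)s_0}$; symmetrically, \eqref{e3.y1} is extended to hold for $|\eta|\le(a_2/a_1)|\xi|$. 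With these extensions the quantifier ranges really do match. You correctly identified this as the bookkeeping obstacle and named the right tool, so the plan is sound, but it treats as a re-parametrization what is in fact a small independent argument needing its own line of proof.
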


\begin{proof}
By \eqref{volum} for any $t,s\in \R$ we have
\begin{equation}\label{v7}
\frac{a_1}{a_2}2^{-s} \le \frac{|\theta_{y, t+s}|}{|\theta_{x, t}|} \le \frac{a_2}{a_1}2^{-s}
\end{equation}
Hence, if $|\theta_{y, t+s}| \le |\theta_{x, t}|$, then $s\ge -s_0$, where $s_0:=\log_2(a_2/a_1) \ge 0$. As a partial converse, if $s\ge s_0$, then $|\theta_{y, t+s}| \le |\theta_{x, t}|$.

Suppose that the shape condition \eqref{e3.y6} holds for $t\in \R$, $s\ge 0$, and  $\theta_{x,t}\cap \theta_{y,t+s}\ne \emp $. First, we shall show that the same condition also holds for $s\ge -s_0$, albeit for some new constants $a_3'$ and $a_5'$. Indeed, if $s\ge 0$, then there is nothing new to show. Suppose next $-s_0 \le s \le 0$. Then, by the reverse form of \eqref{e3.y6}, see Remark \ref{r3.y1}, we have
\[
\frac{1}{a_5} 2^{-a_6 s} \le
1/\|(M_{y,t+s})^{-1} M_{x,t}\| \le
\| (M_{x,t})^{-1} M_{y,t+s}\|
\le \frac{1}{a_3} 2^{-a_4 s}.
\]
Take $\tilde a_3= \min(a_3,1/a_5 )$ and $\tilde a_5=\max(a_5,(1/a_3) 2^{(a_6-a_4)s_0})$. Since $a_6 \le a_4$, for $-s_0 \le s \le 0$ we have
\[
\tilde a_3 2^{-a_4 s} \le \frac{1}{a_5} 2^{-a_6 s} \quad\text{and}\quad  \frac{1}{a_3} 2^{-a_4 s}\le  \tilde a_5  2^{-a_6 s}.
\]
Therefore, if $t\in \R$, $s\ge -s_0$, and  $\theta_{x,t}\cap \theta_{y,t+s}\ne \emp $, then
\begin{equation}
\label{e3.y7}
\tilde a_3 2^{-a_4 s}\le 1 / \| (M_{y,t+s})^{-1} M_{x,t}\|
\le \|(M_{x,t})^{-1} M_{y,t+s}\|
\le \tilde a_5 2^{-a_6 s}.
\end{equation}

Now suppose we have two ellipsoids $\xi,\eta\in\T$ such that $|\eta|\le |\xi|$ and $\xi\cap\eta\ne\emptyset$. We write $\eta =\theta_{y, t+s}$ and $\xi = \theta_{x, t}$ for some $x,y \in \R^n$ and $t,s \in \R$. Since $|\eta|\le |\xi|$, we necessarily have  $s\ge -s_0$. By the right-hand side inequality of \eqref{e3.y7} we have
$
(M_{x,t})^{-1} M_{y,t+s}(\B) \Sub \tilde a_5 2^{-a_6 s}\B
$.
Hence, \eqref{v7} implies that
\begin{equation}\label{in1}
 M_{y,t+s}(\B) \Sub a_5' \lf(\frac{|\theta_{y,t+s}|}{|\theta_{x,t}|}\r)^{a_6 }M_{x,t}(\B),
\end{equation}
where $a_5'= \tilde a_5( a_2/a_1)^{a_6}$. Applying the same argument for the left-hand side inequality of \eqref{e3.y7} yields
\begin{equation}\label{in2}
 a_3' \lf(\frac{|\theta_{y,t+s}|}{|\theta_{x,t}|}\r)^{a_4 }M_{x,t}(\B)
 \Sub  M_{y,t+s}(\B) ,
\end{equation}
where $a_3'=\tilde a_3 (a_1/a_2)^{a_4}$. This shows \eqref{e3.y1}.

Conversely, suppose that \eqref{e3.y1} holds for $\xi,\eta\in\T$, $|\eta|\le |\xi|$, and $\xi\cap\eta\ne\emptyset$. We claim that the same condition holds when $|\eta|\le (a_2/a_1) |\xi|$, albeit for some new constants $\check a_3$ and $\check a_5$. Indeed, if $|\eta|\le |\xi|$, then there is nothing new to show. Suppose that $|\xi| \le |\eta| \le (a_2/a_1)|\xi|$. Then, by \eqref{e3.y1} and by reversing order of inclusions we have
\[
\frac{1}{a_5'}\bigg(\frac{|\eta|}{|\xi|}\bigg)^{a_6} (\xi-c_\xi) \Sub \eta-c_\eta
\Sub \frac1{a_3'} \bigg(\frac{|\eta|}{|\xi|} \bigg)^{a_4} (\xi-c_\xi).
\]
Hence,  \eqref{e3.y1} holds with constants $\check a_3 = \min(a'_3,1/a_5')$ and $\check a_5 = \max(a'_5,(1/a_3)(a_2/a_1)^{a_4-a_6} )$ in place of $a_3'$ and $a_5'$, resp. Now, take any $x,y \in\R^n$, $t\in \R$, and $s\ge 0$ such that $\theta_{x,t}\cap \theta_{y,t+s}\ne \emp $. Letting $\eta=\theta_{y,t+s}$ and $\xi=\theta_{x,t}$, \eqref{e3.y1} yields \eqref{in1} and \eqref{in2}. Converting these inclusions into norm inequalities using \eqref{v7} yields \eqref{e3.y6} for appropriate constants $a_3$ and $a_5$ .
\end{proof}

\begin{rem}
As a consequence of Lemma \ref{geo} we propose the alternative geometric definition of an ellipsoid cover $\Theta$, which will be used in our consideration in Section \ref{s4}. A collection $\T=\{ \xi_{x}^r: x\in\Rn,r>0\}$ is a continuous ellipsoid cover if there exist positive constants ${\mathbf p}(\T):=\{a_1,\ldots, a_6\}$
such that:
\begin{itemize}
\item[(i)]
for every $x\in \R^n$ and $r>0$, $\xi_x^r$ is an ellipsoid with center $x$ and volume satisfying
\[
a_1 r\leq|\xi_{x}^r|\leq a_2 r,
\]
\item[(ii)]
for any ellipsoids $\xi$, $\eta\in\T$, such that $|\eta|\le |\xi|$ and $\xi\cap\eta\ne\emptyset$, we have \eqref{e3.y1}.
\end{itemize}
To translate between two formulations involving scale $t\in \R$ and ``radius'' $r>0$, it suffices to take $\theta_{x,\, t} = \xi_x^{r}$, where $r=2^{-t}$, and then apply Lemma \ref{geo}.
\end{rem}


The following lemma from \cite[Lemma 2.2]{DPW} is a direct consequence of the shape condition \eqref{e3.y6}.

\begin{lemma}\label{l4.3}
Let $\T$ be a continuous ellipsoid cover. Then there exists $c>0$ depending only on ${\mathbf p}(\T)$  such that for any $x\in\Rn,\,t\in\R$ and $\lambda\ge1$, we have $\lambda \cdot \theta_{x,\,t}\Sub \theta_{x,\,t-c\lambda}$.
\end{lemma}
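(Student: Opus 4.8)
The plan is to apply the shape condition to the pair of \emph{concentric} ellipsoids $\theta_{x,t}$ and $\theta_{x,t-c\la}$. These two ellipsoids always intersect, since they share their common center $x$, so the shape condition is available to them. Concretely, for every $s\ge 0$ we have $x\in\theta_{x,t}\cap\theta_{x,t-s}$, hence by the reverse form of the shape condition in Remark~\ref{r3.y1} applied with $y=x$,
\[
\|(M_{x,t-s})^{-1}M_{x,t}\|\le a_5\,2^{-a_6 s}.
\]

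Next I would reduce the desired inclusion to a norm inequality. Writing $\theta_{x,t}=M_{x,t}(\B)+x$ and $\theta_{x,t-s}=M_{x,t-s}(\B)+x$, the inclusion $\la\cdot\theta_{x,t}\Sub\theta_{x,t-s}$ is equivalent to $\la M_{x,t}(\B)\Sub M_{x,t-s}(\B)$, i.e.\ to $\la\,(M_{x,t-s})^{-1}M_{x,t}(\B)\Sub\B$, and the latter certainly holds whenever $\la\,\|(M_{x,t-s})^{-1}M_{x,t}\|\le1$. Combining this with the previous display, it suffices to pick $s\ge0$ with $\la a_5 2^{-a_6 s}\le1$, that is, $s\ge a_6^{-1}\log_2(a_5\la)$.

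Finally I would choose $c$ so that $s=c\la$ meets this threshold uniformly over $\la\ge1$. Using $\log_2\la\le\la$ for $\la\ge1$ (equivalently $\la\le2^\la$), one has $\log_2(a_5\la)=\log_2\la+\log_2 a_5\le\la\max(1,1+\log_2 a_5)$, so the constant $c:=a_6^{-1}\max(1,1+\log_2 a_5)$, which depends only on ${\mathbf p}(\T)$, works: for every $x\in\Rn$, $t\in\R$, and $\la\ge1$ we get $c\la\ge a_6^{-1}\log_2(a_5\la)\ge0$, hence $\la\cdot\theta_{x,t}\Sub\theta_{x,t-c\la}$. I do not expect a genuine obstacle here; the only two points needing a word of justification are that the shape condition really applies to $\theta_{x,t}$ and $\theta_{x,t-c\la}$ (immediate from their common point $x$), and that the additive term $\log_2 a_5$, of unknown sign, together with $\log_2\la$, can be absorbed into a quantity linear in $\la$ so that a single $c$ serves the whole range $\la\ge1$.
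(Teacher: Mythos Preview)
Your proof is correct. The paper does not spell out its own proof of this lemma but cites it from \cite[Lemma 2.2]{DPW} and notes that it is ``a direct consequence of the shape condition \eqref{e3.y6}''; your argument is exactly such a direct consequence, applying the shape condition (in the form of Remark~\ref{r3.y1}) to the concentric pair $\theta_{x,t}$, $\theta_{x,t-s}$ and then choosing $s=c\lambda$.
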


The following lemma is a continuous analogue of \cite[Lemma 2.8]{DDP}, which was originally shown for discrete ellipsoid covers. Hence, for the sake of completeness we include its proof.

\begin{lemma}\label{l4.1}
Let $\T$ be a continuous ellipsoid cover. Then there exists a constant $s^\ast\ge 0$ depending only on ${\mathbf p}(\T)$ such that for any ellipsoids $\theta_{x,\,t}$ and $\theta_{y,\,t+s}$ with $\theta_{x,\,t}\cap \theta_{y,\,t+s}\ne \emptyset$, where $x,\, y\in\Rn,\, t\in\R$ and $s\ge 0$, we have
$\theta_{x,\,t}\cup \theta_{y,\,t+s}\Sub \theta_{x,\,t-\ell}$ for any $\ell\ge s^*$,.
\end{lemma}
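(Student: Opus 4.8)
The plan is to deduce the statement from Lemma \ref{l4.3} by showing that the union $\theta_{x,t}\cup\theta_{y,t+s}$ is already contained in a \emph{fixed} dilate $\lambda_0\cdot\theta_{x,t}$ of the first ellipsoid, where $\lambda_0\ge 1$ depends only on ${\mathbf p}(\T)$ and, crucially, is independent of $s\ge 0$. Concretely, I would first invoke the right-hand inequality of the shape condition \eqref{e3.y6}: since $\theta_{x,t}\cap\theta_{y,t+s}\ne\emp$ and $s\ge 0$, we get
\[
\|(M_{x,t})^{-1}M_{y,t+s}\|\le a_5 2^{-a_6 s}\le a_5,
\]
which translates into the inclusion $\theta_{y,t+s}-y=M_{y,t+s}(\B)\Sub a_5 M_{x,t}(\B)$; that is, the ellipsoid $\theta_{y,t+s}$, measured in $M_{x,t}$-coordinates, has ``radius'' at most $a_5$.

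Next I would control the displacement of the centers. Picking $z\in\theta_{x,t}\cap\theta_{y,t+s}$, we have $z-x\in M_{x,t}(\B)$ and $z-y\in M_{y,t+s}(\B)\Sub a_5 M_{x,t}(\B)$, hence
\[
y-x=(z-x)-(z-y)\in (1+a_5)M_{x,t}(\B),
\]
using that scalar multiples of the symmetric convex body $M_{x,t}(\B)$ add linearly. Combining this with the previous inclusion, every $w\in\theta_{y,t+s}$ satisfies $w-x=(y-x)+(w-y)\in(1+2a_5)M_{x,t}(\B)$, so $\theta_{y,t+s}\Sub(1+2a_5)\cdot\theta_{x,t}$. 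Since trivially $\theta_{x,t}\Sub(1+2a_5)\cdot\theta_{x,t}$, we conclude
\[
\theta_{x,t}\cup\theta_{y,t+s}\Sub\lambda_0\cdot\theta_{x,t},\qquad \lambda_0:=1+2a_5\ge 1,
\]
with $\lambda_0$ depending only on ${\mathbf p}(\T)$.

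Finally, for \emph{any} $\lambda\ge\lambda_0$ we have $\lambda_0\cdot\theta_{x,t}\Sub\lambda\cdot\theta_{x,t}\Sub\theta_{x,t-c\lambda}$ by Lemma \ref{l4.3}, where $c>0$ depends only on ${\mathbf p}(\T)$. Thus, setting $s^\ast:=c\lambda_0$ and, given any $\ell\ge s^\ast$, choosing $\lambda:=\ell/c\ge\lambda_0$ yields $\theta_{x,t}\cup\theta_{y,t+s}\Sub\theta_{x,t-\ell}$, which is the desired conclusion. I do not expect a genuine obstacle here: the only point requiring care is that the dilation factor $\lambda_0$ must be uniform in $s$, which is exactly what the bound $2^{-a_6 s}\le 1$ (valid because $a_6>0$ and $s\ge 0$) provides; the mild bookkeeping of upgrading the single scale $\ell=c\lambda_0$ produced by Lemma \ref{l4.3} to the whole range $\ell\ge s^\ast$ is handled by applying that lemma to every $\lambda\ge\lambda_0$, since $\lambda\mapsto c\lambda$ then sweeps out $[s^\ast,\fz)$.
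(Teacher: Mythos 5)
Your argument is correct and takes essentially the same route as the paper: both bound $\theta_{x,t}\cup\theta_{y,t+s}\Sub(1+2a_5)\cdot\theta_{x,t}$ using the shape condition and the intersection hypothesis, and then convert a dilation factor into a scale shift via Lemma \ref{l4.3}. The only cosmetic difference is that the paper bounds $\mathrm{diam}\,\omega$ of the image $\omega=(M_{x,t})^{-1}(\theta_{y,t+s}-x)$, whereas you decompose $w-x=(y-x)+(w-y)$ explicitly; the resulting constant $\lambda_0=1+2a_5$ is the same (and in fact your value $s^\ast=c(1+2a_5)$ corrects what appears to be a typo, $s^\ast=(1+a_4)c$, in the paper's stated conclusion).
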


\begin{proof}
We write $\theta_{x,\,t}:=M_{x,t}(\B)+x$, $\theta_{y,\,t+s}:=M_{y,\,t+s}(\B)+y$, and let
$\omega:=(M_{x,\,t})^{-1}(\theta_{y,t+s}-x)$. Then by the shape condition \eqref{e3.y6} and $s\ge0$, we have
\begin{align*}
\mathrm{diam}(\omega)&:=\sup_{z,\,z'\in \omega}|z-z'|=\sup_{z,\,z'\in \B}
|(M_{x,\,t})^{-1}M_{y,\,t+s}(z-z')|\\
&\le2\|(M_{x,\,t})^{-1}M_{y,\,t+s}\|\le 2a_52^{-a_6s}
\le 2a_5.
\end{align*}
This, together with $\theta_{x,\,t}\cap \theta_{y,\,t+s}\ne \emptyset$, implies that
\begin{align}\label{e4.w2}
(M_{x,\,t})^{-1}[(\theta_{x,\,t}\cup\theta_{y,\,t+s})-x]=\B\cup \omega \Sub  (1+2a_5)\B .
\end{align}
Therefore, we have
\[
\theta_{x,\,t}\cup\theta_{y,\,t+s} \Sub (1+2a_5)  \cdot \theta_{x,\, t}
\]
On the other hand, by Lemma \ref{l4.3} we have for any $\lambda \ge 1+2a_5$,
\[
(1+2a_5)  \cdot \theta_{x,\, t} \Sub \lambda  \cdot \theta_{x,\, t} \Sub \theta_{x,\,t-\lambda c}.
\]
Hence, Lemma \ref{l4.1} holds for $s^*=(1+a_4)c$.
\end{proof}

Next we move to exploring the relationship between continuous ellipsoid covers and quasi-distances on $\R^n$.

\begin{defi} \label{d3.y1}
A mapping $\rh:\Rn\ti\Rn \Ri [0,\infty)$ is called a {\it quasi-distance} if there exists a positive  constant $\ka\ge1$ such that for all $x,y,z\in\Rn$,
\begin{itemize}
\item[(i)]  $\rh(x,y)=0 \ifff x=y$;
\item[(ii)]  $\rh(x,y)=\rh(y,x) $;
\item[(iii)] $\rh(x,z) \leq \ka(\rh(x,y)+\rh(y,z))$.
\end{itemize}
\end{defi}

Dahmen, Dekel, and Petrushev have shown that an ellipsoid cover $\Theta$ induces a quasi-distance $\rho_\Theta$ on $\R^n$, see \cite[Proposition 2.7]{DDP}. Moreover, $\R^n$ equipped with the quasi-distance $\rho_\Theta$ and the Lebesgue measure is a space of homogeneous type which is Ahlfors $1$-regular \cite[Proposition 2.10]{DDP}. These results can be summarized as follows.

\begin{prop}\label{p3.x2}
Let $\Theta$ be a continuous ellipsoid cover. The function $\rho_\Theta:\R^n\times \R^n\rightarrow [0,\fz)$ defined by
\begin{equation}\label{e3.10}
\rho_\Theta(x,y):=\inf_{\theta\in\Theta}\lf\{|\theta|:x,y\in\theta\r\}
\end{equation}
is a quasi-distance on $\R^n$. Moreover, the Lebesgue measure of balls
\begin{equation}\label{e3.3}
B_{\rho_\Theta}(x,r)=\{y\in\R^n:\rho_\Theta(x,y)<r\}
\end{equation}
with respect to the quasi-distance $\rho_\Theta$ satisfies
\begin{equation}\label{ahlfors}
|B_{\rho_\Theta}(x,r)| \sim r \qquad\text{for all }x\in \R^n, \ r>0,
\end{equation}
with equivalence constants depending only on $\mathbf{p}(\Theta)$.
\end{prop}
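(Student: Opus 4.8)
The plan is to verify the three quasi-distance axioms for $\rho_\Theta$ directly from the definition \eqref{e3.10}, and then to establish the Ahlfors $1$-regularity \eqref{ahlfors} by sandwiching each ball $B_{\rho_\Theta}(x,r)$ between two ellipsoids whose volumes are comparable to $r$. First I would check that the infimum in \eqref{e3.10} is attained (or at least finite and positive when $x\ne y$): covering property (i) of Definition \ref{d3.x1} gives, for each scale $t$, an ellipsoid $\theta_{x,t}$ of volume $\sim 2^{-t}$ centered at $x$, and by Lemma \ref{l4.3} the dilates $\lambda\cdot\theta_{x,t}$ (contained in $\theta_{x,t-c\lambda}$) eventually swallow any prescribed $y$, so the infimum is over a nonempty set; positivity for $x\ne y$ follows because every ellipsoid containing both $x$ and $y$ must have volume bounded below in terms of $|x-y|$ — here one would combine the volume bound \eqref{volum} with the shape condition to see that the ellipsoids in $\Theta$ cannot be arbitrarily ``thin'' at a fixed location, an argument that is cleanest via Lemma \ref{l4.1}. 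Axiom (ii), symmetry, is immediate since the defining condition ``$x,y\in\theta$'' is symmetric in $x,y$.

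The quasi-triangle inequality (iii) is the heart of the quasi-distance claim. Given $x,y,z$, pick near-optimal ellipsoids $\theta'\ni x,y$ with $|\theta'|\le 2\rho_\Theta(x,y)$ and $\theta''\ni y,z$ with $|\theta''|\le 2\rho_\Theta(y,z)$. Write $\theta'=\theta_{x',t'}$ and $\theta''=\theta_{x'',t''}$ in the scale parametrization; without loss of generality $|\theta''|\le|\theta'|$, i.e. $t''\ge t'$. Since $y\in\theta'\cap\theta''$, these two ellipsoids intersect, so Lemma \ref{l4.1} applies: there is $s^*$, depending only on $\mathbf p(\Theta)$, with $\theta'\cup\theta''\Sub\theta_{x',t'-\ell}$ for all $\ell\ge s^*$. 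Taking $\ell=s^*$ produces a single ellipsoid $\tilde\theta:=\theta_{x',t'-s^*}$ containing all of $x,y,z$, with $|\tilde\theta|\le a_2 2^{-(t'-s^*)}=a_2 2^{s^*}2^{-t'}\le (a_2 2^{s^*}/a_1)|\theta'|$ by \eqref{volum}. Hence $\rho_\Theta(x,z)\le|\tilde\theta|\le C|\theta'|\le 2C\rho_\Theta(x,y)\le 2C(\rho_\Theta(x,y)+\rho_\Theta(y,z))$ with $C=a_2 2^{s^*}/a_1$, which gives (iii) with $\kappa=2C$.

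For the Ahlfors regularity \eqref{ahlfors}, the lower bound $|B_{\rho_\Theta}(x,r)|\gtrsim r$ comes from the covering ellipsoids: choose $t$ with $a_2 2^{-t}<r$, say $2^{-t}\sim r$, so that $|\theta_{x,t}|<r$; then for every $y\in\theta_{x,t}$ the ellipsoid $\theta_{x,t}$ itself witnesses $\rho_\Theta(x,y)\le|\theta_{x,t}|<r$, whence $\theta_{x,t}\Sub B_{\rho_\Theta}(x,r)$ and $|B_{\rho_\Theta}(x,r)|\ge|\theta_{x,t}|\sim r$. For the upper bound $|B_{\rho_\Theta}(x,r)|\lesssim r$, I would show $B_{\rho_\Theta}(x,r)$ is contained in a fixed dilate of some $\theta_{x,t}$ with $|\theta_{x,t}|\sim r$: if $\rho_\Theta(x,y)<r$ then there is $\theta\ni x,y$ with $|\theta|<r$; comparing $\theta$ with the covering ellipsoid $\theta_{x,t}$ of comparable volume (they share the point $x$, so they intersect), the shape condition in the form of Lemma \ref{geo}, equation \eqref{e3.y1}, bounds $\theta-c_\theta$ by a fixed multiple of $\theta_{x,t}-x$; since $x\in\theta$ this forces $y\in C'\cdot\theta_{x,t}$ for a constant $C'=C'(\mathbf p(\Theta))$, so $B_{\rho_\Theta}(x,r)\Sub C'\cdot\theta_{x,t}$ and $|B_{\rho_\Theta}(x,r)|\le (C')^n|\theta_{x,t}|\sim r$. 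The main obstacle is bookkeeping: one must be careful that all the containments and volume comparisons use constants depending only on $\mathbf p(\Theta)$ and not on $x$, $r$, or the particular near-optimal ellipsoids chosen — the geometric inputs (Lemmas \ref{l4.3}, \ref{l4.1}, \ref{geo}) are exactly what makes this uniformity work, so the real work is organizing the estimates rather than proving anything deep.
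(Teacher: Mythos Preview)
The paper does not prove Proposition~\ref{p3.x2}; it records it as a known result, citing \cite[Proposition~2.7 and Proposition~2.10]{DDP} for the quasi-distance and Ahlfors-regularity claims, respectively. Your sketch is essentially the standard argument behind those results: the quasi-triangle inequality via the engulfing Lemma~\ref{l4.1}, and Ahlfors regularity by sandwiching $B_{\rho_\Theta}(x,r)$ between two cover ellipsoids of comparable volume. So the approach is correct and matches what lies behind the cited references; indeed, Lemma~\ref{l4.1} is included in the paper precisely as the continuous analogue of the relevant lemma in \cite{DDP}.

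Two small points of bookkeeping. First, in the triangle-inequality step you write ``$|\theta''|\le|\theta'|$, i.e.\ $t''\ge t'$''; this implication is not literally true because of the $a_1,a_2$ gap in \eqref{volum} (one only gets $t''\ge t'-\log_2(a_2/a_1)$). The clean fix is simply to order by scale parameter rather than by volume: assume $t'\le t''$ and apply Lemma~\ref{l4.1} as stated; the resulting bound is then in terms of $2^{-t'}$, which controls $\max(|\theta'|,|\theta''|)$ up to the constant $a_2/a_1$. Second, your positivity argument for $x\ne y$ is a bit vague and does not really go through Lemma~\ref{l4.1}; the direct route is the shape condition in the form \eqref{e3.y1} (or the diameter estimate in the proof of Lemma~\ref{l4.1}): any $\theta\in\Theta$ containing $x$ intersects $\theta_{x,0}$, so $\theta-c_\theta\subseteq a_5'(|\theta|/|\theta_{x,0}|)^{a_6}(\theta_{x,0}-x)$, which forces $\operatorname{diam}\theta\to 0$ as $|\theta|\to 0$ and hence excludes $y$ once $|\theta|$ is small enough. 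Neither issue affects the overall correctness of your plan.
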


The condition \eqref{ahlfors} states the Lebesgue measure is $1$-Ahlfors regular with respect the quasi-distance $\rho_\Theta$. This immediately implies the doubling property $|B_{\rho_\Theta}(x,2r)| \ls |B_{\rho_\Theta}(x,r)|$, which is a defining feature of spaces of homogeneous type introduced by Coifman and Weiss \cite{cw71, cw77}.

 The following result is stated without the proof in \cite[Theorem 2.7]{DPW}. Its proof can be found in \cite[Proposition 2.10]{bll}.

\begin{prop}\label{p4.x1}
Let $\Theta$ be a continuous ellipsoid cover and let $\rho_\Theta$ be a quasi-distance as in \eqref{e3.10}. For any ball $B_{\rho_\Theta}(x,r)$ with $x\in\R^n$ and $r>0$, there exist $t_1, t_2\in \R$ such that
\[
\theta_{x, \,t_1}\subset B_{\rho_\Theta}(x,r)
\subset\theta_{x,\,t_2}
\qquad\text{and}\qquad
|\theta_{x, \,t_1}|\sim |\theta_{x,\,t_2}|\sim r,
\]
where equivalence constants depend only on $\mathbf{p}(\Theta)$.
\end{prop}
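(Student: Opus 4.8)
\textbf{Proof plan for Proposition \ref{p4.x1}.}

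The plan is to produce the two scales $t_1$ and $t_2$ directly from the definition \eqref{e3.10} of $\rho_\Theta$ together with the volume bound \eqref{volum}, and then to control containments by combining the shape condition (in its geometric form, Lemma \ref{geo}) with Theorem \ref{t2.x1}. For the \emph{outer} inclusion, I would first observe that by \eqref{e3.10}, for every $y$ with $\rho_\Theta(x,y)<r$ there is an ellipsoid $\theta\in\Theta$ with $x,y\in\theta$ and $|\theta|<r$; since $x\in\theta$, Lemma \ref{l4.3} (or rather Lemma \ref{l4.1} applied with the ellipsoid $\theta_{x,t}$ where $2^{-t}\sim|\theta|$) lets me absorb $\theta$ into a dilate of a centered ellipsoid $\theta_{x,t}$ of comparable volume. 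The subtlety is that $\theta$ need not be centered at $x$ and its volume varies with $y$; to handle this uniformly, I would pick $t_0$ so that $|\theta_{x,t_0}| \sim r$ (possible by \eqref{volum}), note that any such $\theta$ with $|\theta|<r$ has $|\theta|\gtrsim |\theta_{x,t_0}|$ only after we also throw away $y$'s that are very close to $x$ — but in fact for \emph{all} relevant $y$ we have $|\theta| \le r$, and since $x\in\theta\cap\theta_{x,t_0}$, Lemma \ref{geo} gives $\theta \Sub c\cdot\theta_{x,t_0}$ with $c$ depending only on $\mathbf{p}(\Theta)$ and the ratio $|\theta|/|\theta_{x,t_0}| \le a_2/a_1$; hence $y\in c\cdot \theta_{x,t_0} \Sub \theta_{x,t_2}$ for a suitable $t_2$ by Lemma \ref{l4.3}, with $|\theta_{x,t_2}|\sim r$.

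For the \emph{inner} inclusion I would argue in the reverse direction: choose $t_1$ with $|\theta_{x,t_1}|$ a small fixed multiple of $r$, to be fixed below. For any $y\in\theta_{x,t_1}$, the very ellipsoid $\theta_{x,t_1}$ itself witnesses that $\rho_\Theta(x,y)\le|\theta_{x,t_1}|$; so if we arrange $|\theta_{x,t_1}| < r$ — which by \eqref{volum} holds once $2^{-t_1} < r/a_2$ — we get $\rho_\Theta(x,y)<r$, i.e. $\theta_{x,t_1}\subset B_{\rho_\Theta}(x,r)$. Taking $2^{-t_1}$ equal to, say, $r/(2a_2)$ makes $|\theta_{x,t_1}|\in[a_1 r/(2a_2),\, r/2]$, so $|\theta_{x,t_1}|\sim r$ with constants depending only on $\mathbf{p}(\Theta)$. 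Finally $|\theta_{x,t_1}|\sim|\theta_{x,t_2}|\sim r$ is immediate from the two constructions and transitivity of $\sim$.

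The main obstacle is the outer inclusion, specifically making the constant in "$\theta\Sub c\cdot\theta_{x,t_0}$" independent of $y$: the covering ellipsoid $\theta$ depends on $y$, both in shape and in volume (any volume in $(0,r)$ is a priori allowed). The resolution is that only the \emph{ratio} $|\theta|/|\theta_{x,t_0}|$ enters the shape estimate \eqref{e3.y1}, and this ratio is bounded above by $a_2/a_1$ whenever $|\theta|\le|\theta_{x,t_0}|$; when instead $|\theta|$ is smaller than $|\theta_{x,t_0}|$, the inclusion $\theta\Sub c\cdot\theta_{x,t_0}$ only improves, since a smaller intersecting ellipsoid of comparable-or-smaller volume sits inside a bounded dilate — here one uses Lemma \ref{geo} with the roles of $\xi=\theta_{x,t_0}$ and $\eta=\theta$, valid precisely because $|\eta|\le|\xi|$, and the exponent $a_6>0$ only helps. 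One must be slightly careful that Lemma \ref{geo} gives containment of \emph{centered} ellipsoids $\eta-c_\eta\Sub a_5'(|\eta|/|\xi|)^{a_6}(\xi-c_\xi)$, so to conclude $y\in\theta\Sub (\text{dilate of }\theta_{x,t_0})$ one combines this with the fact that $c_\eta\in\theta\cap\theta_{x,t_0}$ lies within $O(1)$ copies of $\theta_{x,t_0}$ of its center (since $x\in\theta\cap\theta_{x,t_0}$ too, the two centers are close in the $\theta_{x,t_0}$-metric) — exactly the kind of bookkeeping already carried out in the proof of Lemma \ref{l4.1}. After that, one application of Lemma \ref{l4.3} converts the fixed dilate into an ellipsoid $\theta_{x,t_2}$ with $|\theta_{x,t_2}|\sim r$.
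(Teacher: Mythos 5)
Your plan is essentially correct, and since the paper itself defers the proof to~\cite{bll} rather than writing it out, there is no in-paper argument to compare against; the route you take via Lemma~\ref{geo}/Lemma~\ref{l4.1} together with Lemma~\ref{l4.3} is the natural one and does work.

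The inner inclusion is clean as stated: with $2^{-t_1}=r/(2a_2)$ one has $|\theta_{x,t_1}|\le a_2 2^{-t_1}=r/2<r$, and $x,y\in\theta_{x,t_1}$ forces $\rho_\Theta(x,y)\le|\theta_{x,t_1}|<r$ by \eqref{e3.10}; also $|\theta_{x,t_1}|\ge a_1 r/(2a_2)$, so $|\theta_{x,t_1}|\sim r$. For the outer inclusion, the cleanest version of what you are gesturing at is: fix $2^{-t_0}=r/a_1$, so that $|\theta_{x,t_0}|\ge a_1 2^{-t_0}=r$ and $|\theta_{x,t_0}|\le (a_2/a_1)r$, hence $|\theta_{x,t_0}|\sim r$. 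Given $y\in B_{\rho_\Theta}(x,r)$, pick $\theta=\theta_{y',t'}\in\Theta$ with $x,y\in\theta$ and $|\theta|<r$. Then $a_1 2^{-t'}\le|\theta|<r=a_1 2^{-t_0}$ gives $t'>t_0$, and $x\in\theta_{x,t_0}\cap\theta_{y',t'}$, so Lemma~\ref{l4.1} applies directly with $s=t'-t_0>0$ and yields $y\in\theta\subset\theta_{x,t_0-s^*}=:\theta_{x,t_2}$ with $|\theta_{x,t_2}|\le a_2 2^{s^*}2^{-t_0}\sim r$. This bypasses the center-bookkeeping you sketch via Lemma~\ref{geo}, and it avoids any dependence on the $y$-dependent quantity $|\theta|$.

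A couple of statements in your write-up are imprecise and worth correcting. The remark that ``the ratio $|\theta|/|\theta_{x,t_0}|$ is bounded above by $a_2/a_1$ whenever $|\theta|\le|\theta_{x,t_0}|$'' is trivially true with bound $1$ and has nothing to do with $a_2/a_1$; what the shape estimate actually needs is just $|\eta|/|\xi|\le 1$ together with $a_6>0$, which makes the factor $(|\eta|/|\xi|)^{a_6}\le 1$ and so the smaller $|\theta|$, the stronger the containment. More seriously, ``$c_\eta\in\theta\cap\theta_{x,t_0}$'' is false: $c_\eta$ is the center of $\theta$ and need not lie in $\theta_{x,t_0}$. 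The correct observation is that $x\in\theta\cap\theta_{x,t_0}$, hence $x-c_\eta\in\eta-c_\eta\subset a_5'(\xi-x)$ by \eqref{e3.y1}, and by central symmetry $c_\eta-x\in a_5'(\xi-x)$ as well; adding gives $\eta-x\subset 2a_5'(\xi-x)$, i.e.\ $\eta\subset 2a_5'\cdot\theta_{x,t_0}$, after which one application of Lemma~\ref{l4.3} produces $\theta_{x,t_2}$ with $|\theta_{x,t_2}|\sim r$. With these repairs the argument is complete and depends only on $\mathbf{p}(\Theta)$, as required.
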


Using Proposition \ref{p4.x1} we can introduce a more convenient variant of a quasi-distance induced by a continuous ellipsoid cover.

\begin{prop}\label{p3.x4}
Let $\T$ be an ellipsoid cover. For any $x,\,y\in\Rn$, define
\[
\rh_1(x,\,y):=\inf_{y\in\theta_{x,\,t}\in\T}|\theta_{x,\,t}|
\qquad\text{and}\qquad
\rh_2(x,\,y):=\inf_{x\in\theta_{y,\,t}\in\T}|\theta_{y,\,t}|.
\]
Then the map $\wt\rh_\T(x,\,y):=\min\{\rh_1(x,\,y),\,\rh_2(x,\,y)\}$ is a quasi-distance which is equivalent to $\rh_\T(x,\,y)$ as in \eqref{e3.10}.
\end{prop}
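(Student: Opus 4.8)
The plan is to establish the equivalence $\wt\rh_\T\sim\rh_\T$ first and then read off all three quasi-distance axioms for $\wt\rh_\T$ from this equivalence together with Proposition \ref{p3.x2}, which already tells us that $\rh_\T$ is a quasi-distance.

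One half of the equivalence is immediate. If $y\in\theta_{x,t}$, then $\theta_{x,t}$ contains both its center $x$ and the point $y$, so it is admissible in the infimum \eqref{e3.10}; hence $\rh_\T(x,y)\le\rh_1(x,y)$, and by the symmetric argument $\rh_\T(x,y)\le\rh_2(x,y)$, so that $\rh_\T\le\wt\rh_\T$ pointwise. For the reverse bound I would take $x\ne y$ (the case $x=y$ being trivial), set $r:=2\rh_\T(x,y)>0$, and apply Proposition \ref{p4.x1} to $B_{\rh_\T}(x,r)$: it yields $t_2\in\R$ with $B_{\rh_\T}(x,r)\Sub\theta_{x,t_2}$ and $|\theta_{x,t_2}|\sim r$. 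Since $\rh_\T(x,y)<r$ we get $y\in B_{\rh_\T}(x,r)\Sub\theta_{x,t_2}$, so $\rh_1(x,y)\le|\theta_{x,t_2}|\lesssim r\sim\rh_\T(x,y)$, whence $\wt\rh_\T(x,y)\le\rh_1(x,y)\lesssim\rh_\T(x,y)$ with constant depending only on $\mathbf{p}(\T)$. Combined with $\rh_\T\le\wt\rh_\T$ this gives $\wt\rh_\T\sim\rh_\T$.

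It remains to check Definition \ref{d3.y1}(i)--(iii) for $\wt\rh_\T$. Symmetry (ii) is automatic since $\rh_2(x,y)=\rh_1(y,x)$, so $\wt\rh_\T(x,y)=\min\{\rh_1(x,y),\rh_1(y,x)\}$ is symmetric. For (i): if $x=y$ then $x$ is the center of every $\theta_{x,t}$, hence $x\in\theta_{x,t}$, and since $|\theta_{x,t}|\le a_2 2^{-t}\to0$ as $t\to\fz$ we get $\rh_1(x,x)=0$, so $\wt\rh_\T(x,x)=0$; conversely $\wt\rh_\T(x,y)=0$ forces $\rh_\T(x,y)=0$ via $\rh_\T\le\wt\rh_\T$, so $x=y$. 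The quasi-triangle inequality (iii) is the one point that deserves care, because $\wt\rh_\T$ is assembled from the non-symmetric quantities $\rh_1,\rh_2$ using ellipsoids anchored at a single endpoint; rather than argue with such ellipsoids directly I would route it through $\rh_\T$: with $C\ge1$ the equivalence constant and $\ka$ the quasi-distance constant of $\rh_\T$,
\[
\wt\rh_\T(x,z)\le C\,\rh_\T(x,z)\le C\ka\bigl(\rh_\T(x,y)+\rh_\T(y,z)\bigr)\le C\ka\bigl(\wt\rh_\T(x,y)+\wt\rh_\T(y,z)\bigr),
\]
where the last step uses $\rh_\T\le\wt\rh_\T$; thus (iii) holds with constant $C\ka$.

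I do not expect a genuine obstacle here: the only non-formal ingredient is the reverse comparison $\wt\rh_\T\lesssim\rh_\T$, and the work behind it is already packaged in Proposition \ref{p4.x1}. For an argument not relying on Proposition \ref{p4.x1} one can instead pick $\theta\in\T$ containing $x$ and $y$ with $|\theta|\le2\rh_\T(x,y)$, note that $\theta$ meets $\theta_{x,t}$ at the scale $t$ with $|\theta_{x,t}|\sim|\theta|$ (they share the point $x$), and apply Lemma \ref{l4.1} to absorb both ellipsoids into some $\theta_{x,t'}$ with $|\theta_{x,t'}|\lesssim|\theta|$; this $\theta_{x,t'}$ contains $y$, which bounds $\rh_1(x,y)$ the same way after the usual bounded scale bookkeeping.
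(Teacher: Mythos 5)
Your proposal is correct and takes essentially the same route as the paper: both reduce the statement to the comparison $\rh_1\lesssim\rh_\T$ via Proposition~\ref{p4.x1}, and then note that a symmetric function equivalent to a quasi-distance is itself a quasi-distance. Your choice $r:=2\rh_\T(x,y)$ is in fact a slightly cleaner bookkeeping than the paper's $r:=\rh_\T(x,y)$ (which requires reading ``$y\in B_{\rh_\T}(x,2r)$'' rather than the $B_{\rh_\T}(x,r)$ as written there), and your explicit verification of axioms (i)--(iii) is just an unpacking of the same observation the paper makes in one sentence.
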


\begin{proof}
It suffices to show that
 \begin{align}\label{e3.s1}
\rh_\T(x,\,y)\sim \rh_1(x,\,y)\quad \mbox{for\ any \ } x,\,y\in\Rn.
\end{align}
 Indeed, if \eqref{e3.s1} holds, then by symmetry we have
 $\rh_\T(x,\,y)\sim\rh_2(x,\,y)$, and therefore
 $$\rh_\T(x,\,y)\sim\min\{\rh_1(x,\,y),\,\rh_2(x,\,y)\}=\wt\rh_\T(x,\,y).$$
Since $\wt\rh_\T(x,\,y)=\wt\rh_\T(y,\,x)$, the fact that $\rh_\T$ is
 a quasi-distance (see Proposition \ref{p3.x2}), implies that $\wt\rh_\T$ is also a
 quasi-distance which is equivalent to $\rh_\T$.

Obviously, $\rh_\T(x,\,y)\le \rh_1(x,\,y)$, so it remains to prove that
there exists a constant $C>0$ such that $\rh_1(x,\,y)\le C\rh_\T(x,\,y)$.
Let $r:=\rh_\T(x,\,y)$. By Proposition \ref{p4.x1}, there exist two ellipsoids $\theta_{x,\,t_1},\,\theta_{x,\,t_2}$ with $|\theta_{x,\,t_1}|\sim|\theta_{x,\,t_2}|\sim r$ such that
$$\theta_{x,\,t_1} \Sub B_{\rh_\T}(x,\,2r)\Sub  \theta_{x,\,t_2}.  $$
Since $y\in B_{\rh_\T}(x,\,r)$, by the definition of $\rh_1(x,\,y)$, it follows that
$$\rh_1(x,\,y)\le |\theta_{x,\,t_2}|\sim r=\rh_\T(x,\,y),$$
which completes the proof of Proposition \ref{p3.x4}.
\end{proof}

\section{Quasi-convex quasi-distances on $\R^n$}\label{s4}

In this section we show that the quasi-distance $\rho_\Theta$, induced by a continuous ellipsoid cover $\Theta$, is not only $1$-Ahlfors-regular, but it also satisfies two other crucial properties: quasi-convexity and the inner property. We also show the converse statement that any quasi-convex, $1$-Ahlfors-regular quasi-distance $\rho$ automatically satisfies the inner property and generates a continuous ellipsoid cover $\Xi$. In addition, the quasi-distance $\rho_\Xi$, induced by $\Xi$, is equivalent to $\rho$. This constitutes the main result of the paper.

We start by recalling properties of convex bodies in $\R^n$. A {\it convex body} in $\Rn$ is a compact convex set with nonempty interior.
Fritz John \cite[p.\,202, Theorem III]{FJ} proved that every convex body in $\R^n$ contains a unique ellipsoid of maximal volume. The dilate by the dimension $n$ of such ellipsoid contains the original convex body, see \cite{ball} and \cite[Theorem 3.13]{TV}.

\begin{theorem}\label{Thm. John}
Let $K\Sub\Rn$ be a convex body. Then there exists a unique ellipsoid $\xi \Sub \R^n$ of maximal volume such that $\xi \Sub K$. Moreover, $K\Sub n \cdot \xi$.
\end{theorem}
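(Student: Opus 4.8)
The plan is to establish the three assertions — existence of a maximal-volume inscribed ellipsoid, its uniqueness, and the inclusion $K\Sub n\cdot\xi$ — in turn, the last being by far the most substantial. (Since this is the classical theorem of John, one may alternatively just appeal to \cite{ball, FJ, TV}; below I sketch a self-contained route.)

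\textbf{Existence.} Every ellipsoid in $\Rn$ can be written $E(A,c):=A(\B)+c$ with $A$ symmetric positive definite (polar decomposition) and $c\in\Rn$, and then $|E(A,c)|=\omega_n\det A$ where $\omega_n:=|\B|$. As $K$ has nonempty interior it contains a nondegenerate ellipsoid, so $\sup\{\det A: E(A,c)\Sub K\}>0$; as $K$ is bounded, the set of admissible $(A,c)$ is a compact subset of (symmetric matrices)$\times\Rn$, and $(A,c)\mapsto\det A$ is continuous, so the supremum is attained by some $\xi=E(A_0,c_0)$.

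\textbf{Uniqueness.} Let $\xi_j=E(A_j,c_j)$, $j=1,2$, both be maximal, with $A_j$ symmetric positive definite and $\det A_1=\det A_2=:\delta$. By convexity, $\tfrac12\xi_1+\tfrac12\xi_2\Sub\tfrac12K+\tfrac12K=K$, and choosing the same point of $\B$ in both summands gives $E\bigl(\tfrac12(A_1+A_2),\tfrac12(c_1+c_2)\bigr)\Sub\tfrac12\xi_1+\tfrac12\xi_2\Sub K$. By the Minkowski determinant inequality $\det\bigl(\tfrac12(A_1+A_2)\bigr)^{1/n}\ge\tfrac12\det(A_1)^{1/n}+\tfrac12\det(A_2)^{1/n}=\delta^{1/n}$, and maximality forces equality, whence $A_1=A_2=:A$. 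If then $c_1\ne c_2$, applying the affine map $A^{-1}$ we may assume $A=I$ and $c_1=-c_2=:v\ne0$; the convex hull of $\B+v$ and $\B-v$ is the ``capsule'' $\{w+tv:w\in\B,\ |t|\le1\}$, which (by a short computation) contains the ellipsoid of semi-axis $1+|v|$ along $v$ and $1$ in the orthogonal directions, of volume $(1+|v|)\omega_n>\omega_n$ — contradicting maximality. Hence $\xi_1=\xi_2$.

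\textbf{The inclusion $K\Sub n\cdot\xi$.} By an affine change of variables assume $\xi=\B$. The heart of the matter is John's optimality condition: there exist finitely many \emph{contact points} $u_1,\dots,u_m\in\pa\B\cap\pa K$ and weights $\lambda_1,\dots,\lambda_m>0$ with $\sum_i\lambda_i u_i=0$ and $\sum_i\lambda_i u_iu_i^T=I$. This is obtained by a Lagrange/separation argument: a feasible first-order perturbation $E(I+\ez S,\ez v)\Sub K$ of $\B$ (with $S$ symmetric) requires $u^TSu+\langle v,u\rangle\le0$ at each contact point $u$ (the active constraints), while maximality forces $\operatorname{tr}S\le0$; hence $\operatorname{tr}S\le0$ whenever $u^TSu+\langle v,u\rangle\le0$ for all contact $u$, and duality together with Carathéodory's theorem in the $\bigl(\binom{n+1}{2}+n\bigr)$-dimensional space of pairs $(S,v)$ yields the $\lambda_i$ (taking traces gives $\sum_i\lambda_i=n$, so they are not all zero). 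Granting this, fix $x\in K$. Since $\B\Sub K$ and $u_i\in\pa K$, the unique supporting hyperplane of $\B$ at $u_i$, namely $\{y:\langle y,u_i\rangle=1\}$, also supports $K$, so $\langle x,u_i\rangle\le1$. Using $\sum_i\lambda_i u_iu_i^T=I$ and $\sum_i\lambda_i u_i=0$,
\[
x=\sum_i\lambda_i\langle x,u_i\rangle u_i=-\sum_i\lambda_i\bigl(1-\langle x,u_i\rangle\bigr)u_i,
\]
where each coefficient $\lambda_i(1-\langle x,u_i\rangle)\ge0$ and $\sum_i\lambda_i(1-\langle x,u_i\rangle)=\sum_i\lambda_i=n$. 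The triangle inequality then gives $|x|\le\sum_i\lambda_i(1-\langle x,u_i\rangle)|u_i|=n$, i.e. $x\in n\cdot\B$.

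\textbf{Main obstacle.} The only nontrivial ingredient is the derivation of John's conditions (existence of the contact points $u_i$ and weights $\lambda_i$, and a limiting argument to handle degenerate active constraints); everything else is compactness, the Minkowski determinant inequality, and the one-line computation in the last display. If one wishes to avoid the separation argument entirely, an alternative for $K\Sub n\cdot\xi$ is a direct contradiction: assuming some $x_0\in K$ with $|x_0|>n$, one has $\mathrm{conv}(\B\cup\{x_0\})\Sub K$, and one exhibits an explicit spheroid of revolution about the axis through $x_0$ lying in this convex hull with volume exceeding $\omega_n$ — elementary but computational.
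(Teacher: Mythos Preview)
The paper does not prove this theorem at all: it is stated as a classical result, attributed to Fritz John with references to \cite{ball, FJ, TV}, and then used as a black box to motivate the notion of quasi-convexity. So there is no ``paper's own proof'' to compare against; your proposal goes well beyond what the paper does by supplying a self-contained argument.

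As for the content of your sketch, it is essentially the standard route and is correct. Existence via compactness is fine (you should perhaps note explicitly that admissible $A$'s have eigenvalues bounded by $\mathrm{diam}\,K$ and $c\in K$, which gives the compactness). The uniqueness argument via Minkowski's determinant inequality is correct, and your capsule computation for $c_1\ne c_2$ checks out: with $v=|v|e_1$, a point $(x_1,x')$ lies in the capsule iff $|x'|\le1$ and $|x_1|\le|v|+\sqrt{1-|x'|^2}$, and the spheroid with semiaxes $(1+|v|,1,\dots,1)$ satisfies $|x_1|\le(1+|v|)\sqrt{1-|x'|^2}\le|v|+\sqrt{1-|x'|^2}$. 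For the inclusion $K\Sub n\cdot\B$, your derivation of John's conditions by linearizing the constraint and applying Farkas/duality is the textbook one, and the final computation is clean (note that $\sum_i\lambda_i\langle x,u_i\rangle=\langle x,\sum_i\lambda_i u_i\rangle=0$ is what makes the last line work). Your honest flag that the duality step --- passing from ``$u^TSu+\langle v,u\rangle\le0$ for all contact $u$ implies $\operatorname{tr}S\le0$'' to the existence of finitely many $\lambda_i\ge0$ with $\sum\lambda_i u_iu_i^T=I$ and $\sum\lambda_i u_i=0$ --- hides the real work (compactness of the contact set, a separation argument, Carath\'eodory) is accurate; this is exactly where the cited references \cite{ball, FJ, TV} do the heavy lifting.
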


Motivated by Theorem $\ref{Thm. John}$ and the concept of quasiconformal mapping \cite{H} we can generalize the notion of convexity.

\begin{defi}
Let $Q \ge1$. We say that a subset $K'\Sub\Rn$ is $Q$-{\it quasi-convex} with respect to $x\in K'$, if there exists an ellipsoid $\xi\Sub\Rn$ with center $c_\xi=x$ such that
\begin{equation}\label{qc}
\xi \Sub K' \Sub Q \cdot \xi.
\end{equation}
\end{defi}

By Theorem \ref{Thm. John}, any convex body in $\R^n$ is $Q$-quasi-convex with respect the center of the unique maximal volume ellipsoid contained in the convex body, where $Q=n$. Notice that we do not impose uniqueness in the above definition. Namely, for a given set $K'$ there could be two different ellipsoids (even of maximal volume) satisfying \eqref{qc}.

\begin{defi}\label{quasi convex}
Given a quasi-distance $\rh:\Rn\ti\Rn\Ri[0,\infty)$, we say that $\rh$ is {\it quasi-convex} if there exists $Q\ge1$ such that for any $x\in\Rn$ and $r>0$, the ball
\[
B_\rh(x,r):=\{y\in\Rn:\rh(x,y)<r \}
\]
is $Q$-quasi-convex with respect to $x$. That is, there exists an ellipsoid $\xi^r_x$ with center $x$ such that
\begin{equation}\label{qce}
\xi^r_x\Sub B_\rh(x,r) \Sub Q \cdot \xi^r_x.
\end{equation}
In this case we define the corresponding family of ellipsoids
\begin{equation}\label{qc0}
\Xi_{\rh}:=\{\xi^r_x: x\in\Rn, r>0  \}.
\end{equation}
\end{defi}

\begin{lemma}\label{p4.1}
For any continuous ellipsoid cover  $\T$, the induced quasi-distance $\rh_\T$ given by \eqref{e3.10} is quasi-convex.
\end{lemma}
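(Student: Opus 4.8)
The plan is to use Proposition \ref{p4.x1} together with Theorem \ref{t2.x1} to produce, for each $x\in\R^n$ and $r>0$, an ellipsoid centered at $x$ that is sandwiched between $B_{\rho_\T}(x,r)$ and a controlled dilate of itself. First, I would fix $x\in\R^n$ and $r>0$ and apply Proposition \ref{p4.x1} to the ball $B_{\rho_\T}(x,r)$ to obtain scales $t_1,t_2\in\R$ with
\[
\theta_{x,t_1}\Sub B_{\rho_\T}(x,r)\Sub\theta_{x,t_2},
\qquad
|\theta_{x,t_1}|\sim|\theta_{x,t_2}|\sim r,
\]
where the equivalence constants depend only on $\mathbf{p}(\T)$. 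Note that both $\theta_{x,t_1}$ and $\theta_{x,t_2}$ are centered at $x$, which is exactly what the definition of quasi-convexity requires.

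Next, I would take $\xi_x^r:=\theta_{x,t_1}$. The inclusion $\xi_x^r\Sub B_{\rho_\T}(x,r)$ is immediate. For the reverse inclusion, observe $\theta_{x,t_1}\Sub\theta_{x,t_2}$ and both ellipsoids share the center $x$, so the ``same center'' case of Theorem \ref{t2.x1} gives
\[
\theta_{x,t_2}\Sub\frac{|\theta_{x,t_2}|}{|\theta_{x,t_1}|}\cdot\theta_{x,t_1}.
\]
Since $|\theta_{x,t_2}|/|\theta_{x,t_1}|\le Q$ for a constant $Q\ge 1$ depending only on $\mathbf{p}(\T)$ (because both volumes are comparable to $r$ with constants depending only on $\mathbf{p}(\T)$), we conclude
\[
B_{\rho_\T}(x,r)\Sub\theta_{x,t_2}\Sub Q\cdot\theta_{x,t_1}=Q\cdot\xi_x^r.
\]
This establishes \eqref{qce} with the uniform constant $Q$, hence $\rho_\T$ is quasi-convex.

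I do not expect any genuine obstacle here: the lemma is essentially a packaging of Proposition \ref{p4.x1} (which supplies concentric inner and outer ellipsoids of comparable volume) with the same-center case of Theorem \ref{t2.x1} (which converts the volume comparison into a dilation containment). The only point requiring a line of care is to check that $Q$ can be chosen uniformly in $x$ and $r$, which follows directly from the fact that the implied constants in Proposition \ref{p4.x1} depend only on $\mathbf{p}(\T)$.
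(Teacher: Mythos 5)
Your proof is correct and follows essentially the same route as the paper: both apply Proposition \ref{p4.x1} to obtain the concentric inner and outer ellipsoids $\theta_{x,t_1}$ and $\theta_{x,t_2}$, then invoke the same-center case of Theorem \ref{t2.x1} to convert the volume ratio bound into a dilation containment with a uniform constant $Q$ depending only on $\mathbf{p}(\T)$.
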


\begin{proof} For any ball $B_{\rh_\T}(x,r)$, by Proposition \ref{p4.x1},  there exist two ellipsoids $\theta_{x, t_1},\theta_{x, t_2}\in\T$ and two constants $d_2\ge d_1>0$, which depend only on ${\mathbf p}(\T)$, such that   $\theta_{x, t_1}\Sub B_{\rh_\T}(x,r)\Sub \theta_{x, t_2}$  and
$$d_1r \leq |\theta_{x, t_1}|\leq |B_{\rh_\T}(x,r)|\leq |\theta_{x, t_2}| \le d_2 r.$$
Since $\theta_{x, t_1}\Sub\theta_{x, t_2}$, by Theorem \ref{t2.x1} we conclude that $\theta_{x, t_2}\Sub \frac{|\theta_{x, t_2}|}{|\theta_{x, t_1}|}\theta_{x, t_1}\Sub \frac{d_2}{d_1}\theta_{x, t_1}$. Therefore, we have
$$
\theta_{x, t_1}\Sub B_{\rh_\T}(x,r)\Sub \frac{d_2}{d_1}\theta_{x, t_1}.
$$
This proves that the induced quasi-distance $\rh_\T$ is quasi-convex with $Q=d_2/d_1$.
\end{proof}

We introduce yet another property of a quasi-distance which will play an important role in our considerations.

\begin{defi} \label{d3.yy1}
We say that a quasi-distance $\rh$ on $\R^n$ satisfies the {\it inner property} if there exist  constants $a= a(\rho), b=b(\rho)>0$ such that for any $x\in\Rn,\,r>0$ and $\la \ge 1$,
\begin{equation} \label{ball inner property}
a\la^b(B_\rho(x,\,r)-x)\Sub B_\rh(x,\,\la  r)-x.
\end{equation}
\end{defi}

The inner property is stronger than the reverse doubling property \cite{YZ} since it immediately implies that
\[
a^n \lambda^{bn} |B_\rho(x,\,r)| \le |B_\rh(x,\,\la  r)| \qquad\text{for all }\lambda\ge 1.
\]
While the inner property \eqref{ball inner property} of $\rh$ is formulated in terms of balls, it can also be equivalently phrased in terms of ellipsoids in $\Xi_\rho$.

\begin{lemma}\label{p3.x3}
Let $\rh$ be a quasi-distance on $\R^n$, which is quasi-convex. Let $\xi^r_x$ be the corresponding ellipsoids as in Definition \ref{quasi convex}. Then $\rho$ satisfies the inner property if and only if  there exist  positive constants $a_1, b_1>0$ such that for any $x\in\Rn,\,r>0$ and $\la \ge 1$,
\begin{equation} \label{inner property}
a_1\la^{b_1} \cdot \xi^r_x\Sub \xi^{\la r}_x.
\end{equation}
\end{lemma}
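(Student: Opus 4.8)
The plan is to prove both implications by "sandwiching" the ellipsoids $\xi_x^r$ between balls, using the quasi-convexity relation $\xi_x^r \Sub B_\rho(x,r) \Sub Q\cdot \xi_x^r$ twice (at scale $r$ and at scale $\lambda r$), together with Theorem \ref{t2.x1} to invert inclusions of centered ellipsoids at the cost of a volume ratio. Throughout, I note that all ellipsoids and balls involved are centered at $x$, so after translating by $-x$ everything is centered at the origin and the "moreover" part of Theorem \ref{t2.x1} (no factor $2$) applies. The one extra ingredient I need is control of the volume ratio $|B_\rho(x,\lambda r)|/|B_\rho(x,r)|$: the upper bound $|B_\rho(x,\lambda r)| \le C' \lambda r$ and, for the $1$-Ahlfors regular case, the lower bound $|B_\rho(x,r)| \ge c' r$ — but in fact I should \emph{not} assume Ahlfors regularity here, since the lemma does not state it; instead I will keep volume ratios as they appear and show they are controlled by $\lambda$ to a power precisely \emph{because} of the inner property (for one direction) or produce the inner property from \eqref{inner property} (for the other).

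\textbf{Direction $\Rightarrow$ (inner property for $\rho$ implies \eqref{inner property}).} Assume \eqref{ball inner property}. Starting from quasi-convexity at scale $r$ and the inner property,
\[
a\lambda^b(\xi_x^r - x) \Sub a\lambda^b(B_\rho(x,r)-x) \Sub B_\rho(x,\lambda r) - x.
\]
On the other hand, quasi-convexity at scale $\lambda r$ gives $B_\rho(x,\lambda r) - x \Sub Q\cdot(\xi_x^{\lambda r} - x)$. Combining, $a\lambda^b(\xi_x^r - x) \Sub Q\cdot(\xi_x^{\lambda r}-x)$, i.e.\ $\tfrac{a}{Q}\lambda^b \cdot \xi_x^r \Sub \xi_x^{\lambda r}$ after translating back. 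This is \eqref{inner property} with $a_1 = a/Q$ and $b_1 = b$, and it is valid for all $\lambda \ge 1$ provided $a/Q \le 1$, which we may arrange by shrinking $a$.

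\textbf{Direction $\Leftarrow$ (\eqref{inner property} implies inner property).} Assume \eqref{inner property}. By quasi-convexity at scale $r$, $\xi_x^r \Sub B_\rho(x,r)$, hence by Theorem \ref{t2.x1} (centered case) $B_\rho(x,r) - x$ is \emph{not} directly comparable to $\xi_x^r-x$ from inside, so instead I use the other half: $B_\rho(x,r) \Sub Q\cdot \xi_x^r$, so $B_\rho(x,r)-x \Sub Q\cdot(\xi_x^r - x) \Sub \tfrac{Q}{a_1}\lambda^{-b_1}(\xi_x^{\lambda r}-x)$ by \eqref{inner property}. Wait — I need to be careful about the direction of $\lambda$ in \eqref{inner property}: it reads $a_1\lambda^{b_1}\xi_x^r \Sub \xi_x^{\lambda r}$, which does not immediately let me replace $\xi_x^r$ by a multiple of $\xi_x^{\lambda r}$ with a favorable power. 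The clean route is: $B_\rho(x,\lambda r) \Sup \xi_x^{\lambda r} \Sup a_1\lambda^{b_1}\cdot\xi_x^r$ (using \eqref{inner property}), and then bound $\xi_x^r$ from below by $B_\rho(x,r)$. For the latter, quasi-convexity at scale $r$ gives $B_\rho(x,r) \Sub Q\cdot\xi_x^r$; since these are centered ellipsoid-and-convex-body, Theorem \ref{t2.x1} applied to the maximal ellipsoid inside $B_\rho(x,r)$ is not needed — I simply get $\xi_x^r \Sup \tfrac1{?}(\cdots)$ is \emph{false in general}. The correct statement is only $\xi_x^r \Sub B_\rho(x,r)$, so I get $B_\rho(x,r)-x \Sub Q\cdot(\xi_x^r - x)$, i.e.\ $\tfrac1Q(B_\rho(x,r)-x) \Sub \xi_x^r - x$. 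Plugging in: $B_\rho(x,\lambda r) - x \Sup a_1\lambda^{b_1}(\xi_x^r - x) \Sup \tfrac{a_1}{Q}\lambda^{b_1}(B_\rho(x,r)-x)$, which is exactly \eqref{ball inner property} with $a = a_1/Q$ and $b = b_1$ (again shrinking $a$ so that $a \le 1$).

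\textbf{Main obstacle.} The only delicate point is bookkeeping the directions of the inclusions and which half of the quasi-convexity sandwich \eqref{qce} to invoke on each side; Theorem \ref{t2.x1} is available if a volume-ratio correction is ever needed, but in the centered setting the above argument avoids it entirely, using only the two-sided inclusion \eqref{qce}. I should also double-check that the constants can be normalized so that $a_1 \le 1$ (resp.\ $a \le 1$); this is automatic since if \eqref{inner property} holds with some $a_1$ it holds with any smaller one, the inclusion only weakening. No use of $1$-Ahlfors regularity is required, consistent with the hypotheses of the lemma.
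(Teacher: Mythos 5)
Your proof is correct and follows essentially the same route as the paper: in both directions you chain the two-sided quasi-convexity inclusion $\xi_x^r - x \subseteq B_\rho(x,r)-x \subseteq Q(\xi_x^r - x)$ with the hypothesized inner-type inclusion, arriving at the same constants $a_1 = a/Q$, $b_1 = b$ and $a = a_1/Q$, $b = b_1$. The false starts and the remark about shrinking $a$ are harmless (for $\lambda=1$ one automatically has $a\le 1$ and $Q\ge1$), and your observation that Theorem \ref{t2.x1} is not needed here is accurate.
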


\begin{proof}
Since $\rh$ is quasi convex, for every $x\in \R^n$ and $r>0$, there exists an ellipsoid $\xi_x^r$ such that
\eqref{qce} holds. By \eqref{ball inner property}, it
follows that, for any $x\in\Rn$, $r>0$ and $\la\ge1$,
\begin{eqnarray*}
a\la^b(\xi^r_x-x)\Sub a\la^b(B_\rh(x,\,r)-x)\Sub B_\rh(x,\,\la r)-x\Sub Q(\xi^{\la r}_x-x).
\end{eqnarray*}
Hence, \eqref{inner property} holds true with $a_1=a/Q$ and $b_1=b$. Similarly we can show that \eqref{inner property} implies  \eqref{ball inner property} with $a=a_1/Q$ and $b=b_1$.
\end{proof}

The following lemma implies that intersecting ellipsoids in $\Xi_\rh$ of comparable volume  have similar shapes.

\begin{lemma} \label{p3.x1}
Let $\rh$ be a quasi-distance which is quasi-convex and $1$-Ahlfors-regular. That is, there exists a constant $c_1\ge 1$ such that
\begin{equation}\label{vol}
\frac{1}{c_1} r \le |B_\rh(x,r)| \le c_1 r \qquad \mathrm{for \ all \ }x\in \R^n, r>0.
\end{equation}
Let $\Xi_\rho$ be a family of ellipsoids corresponding to $\rho$ as in Definition \ref{quasi convex} and let  $c_2\ge 1$.
Suppose that $\xi=\xi^r_x$, $\eta=\xi^s_y\in \Xi_\rho$, $x,y\in\R^n$, $r,s>0$, are such that
\begin{equation}\label{p2}
B_\rho(x,r) \cap \eta\neq \emp \qquad\text{and}\qquad
 |\eta| \le c_2|\xi|.
\end{equation}
Then there exists a constant $c  \ge 1$, which depends only on $c_1$, $c_2$, the triangle inequality constant $\kappa$, and the quasi-convexity parameter $Q$, such that
$\eta \Sub c \cdot \xi$.
\end{lemma}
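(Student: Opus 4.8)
The plan is to reduce the statement to a single application of Theorem~\ref{t2.x1}, after (a) controlling the volumes of $\xi$ and $\eta$ in terms of $r$ and $s$, and (b) showing that $\eta$ is contained in a $\rho$-ball centered at $x$ of radius comparable to $r$. It is worth emphasizing in advance that this argument uses only $1$-Ahlfors-regularity, quasi-convexity, and Theorem~\ref{t2.x1}; in particular it does \emph{not} invoke the inner property.

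\emph{Step 1 (volumes).} From $\xi=\xi^r_x\Sub B_\rho(x,r)\Sub Q\cdot\xi^r_x$ (see \eqref{qce}) and \eqref{vol}, taking Lebesgue measures gives $\frac{r}{c_1 Q^n}\le|\xi|\le c_1 r$, and analogously $\frac{s}{c_1 Q^n}\le|\eta|\le c_1 s$. Combined with the hypothesis $|\eta|\le c_2|\xi|$, this yields $s\le C r$ with $C:=c_1^2 Q^n c_2$.

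\emph{Step 2 ($\eta$ sits inside a dilated ball around $x$).} Fix $z\in B_\rho(x,r)\cap\eta$. Since $\eta=\xi^s_y\Sub B_\rho(y,s)$, we have $\rho(y,z)<s$ and $\rho(y,w)<s$ for every $w\in\eta$, while $\rho(x,z)<r$. Two applications of the quasi-triangle inequality (together with the symmetry of $\rho$) give, for every $w\in\eta$,
\[
\rho(x,w)\le\kappa\rho(x,z)+\kappa^2\rho(y,z)+\kappa^2\rho(y,w)<\kappa r+2\kappa^2 s\le C_1 r,
\]
where $C_1:=\kappa+2\kappa^2 C\ge1$. Hence $\eta\Sub B_\rho(x,C_1 r)$.

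\emph{Step 3 (conclusion via Theorem~\ref{t2.x1}).} Because $C_1\ge1$, we have $\xi=\xi^r_x\Sub B_\rho(x,r)\Sub B_\rho(x,C_1 r)\Sub Q\cdot\xi^{C_1 r}_x$, and the ellipsoids $\xi^r_x$ and $Q\cdot\xi^{C_1 r}_x$ share the center $x$. The equal-center case of Theorem~\ref{t2.x1} then gives
\[
Q\cdot\xi^{C_1 r}_x\Sub\frac{|Q\cdot\xi^{C_1 r}_x|}{|\xi^r_x|}\cdot\xi^r_x,
\]
and by Step~1 the volume ratio is bounded by $Q^n(c_1 C_1 r)(c_1 Q^n/r)=Q^{2n}c_1^2 C_1=:c$. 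Thus $Q\cdot\xi^{C_1 r}_x\Sub c\cdot\xi$, and combining with Step~2, $\eta\Sub B_\rho(x,C_1 r)\Sub Q\cdot\xi^{C_1 r}_x\Sub c\cdot\xi$. The constant $c=Q^{2n}c_1^2(\kappa+2\kappa^2 c_1^2 Q^n c_2)$ is $\ge 1$ and depends only on $n$, $c_1$, $c_2$, $\kappa$, and $Q$, as required.

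The only delicate point — and the place where a naive attempt would get stuck — is Step~3: one might expect that passing from $\xi^{C_1 r}_x$ back to $\xi=\xi^r_x$ requires the inner property. This is avoided by noticing that $\xi$ is itself trapped between $B_\rho(x,r)$ and $Q\cdot\xi^{C_1 r}_x$, so the reverse-inclusion half of Theorem~\ref{t2.x1} for ellipsoids with a common center already supplies the needed dilation factor, whose size is then controlled by the volume ratio $|\xi^{C_1 r}_x|/|\xi^r_x|$, which is $\sim C_1$ by $1$-Ahlfors-regularity. Everything else is routine tracking of constants.
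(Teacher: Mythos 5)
Your proof is correct and follows essentially the same route as the paper's: you first bound $s$ by a multiple of $r$ using $1$-Ahlfors-regularity and quasi-convexity, then trap $\eta$ inside a ball $B_\rho(x, C_1 r)$ via the quasi-triangle inequality, and finally apply the equal-center half of Theorem~\ref{t2.x1} to the pair $\xi^r_x \Sub Q\cdot\xi^{C_1 r}_x$ and control the resulting dilation factor by the volume ratio. The only cosmetic differences are that you spell out the triangle-inequality computation (the paper compresses $\kappa r + 2\kappa^2 c_3 r$ to $3\kappa^2 c_3 r$) and you make explicit the nested-ellipsoid inclusion $\xi\Sub Q\cdot\xi^{C_1 r}_x$ that the paper leaves implicit; the argument and constants are otherwise the same.
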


\begin{proof}
By the quasi-convexity of $\rho$, there exists  $Q\ge 1$ such that
\begin{equation}\label{p1}
\xi\Sub B_\rh(x, r)\Sub Q \cdot \xi,\qquad \eta\Sub B_\rh(y, s)\Sub Q \cdot \eta.
\end{equation}
Hence, by \eqref{vol} we have
\begin{equation}\label{qce2}
\frac{1}{Q^n c_1} r \le |\xi| \le c_1 r \qquad\text{and}\qquad
\frac{1}{Q^n c_1} s \le |\eta| \le c_1 s.
\end{equation}
Combining  \eqref{p2}, \eqref{p1}, and \eqref{qce2} we have
$$
 s\le c_3 r, \qquad \text{where } c_3:= Q^n(c_1)^2c_2.
$$
Since  $B_\rh(x,r)\cap B_\rh(y,s)\ne \emp$, the triangle inequality of $\rh$, and the quasi-convex property of $\rh$, implies that
$$\eta\Sub B_\rh(y, s)\Sub  B_\rh(x, \ka (r+2\kappa c_3r)) \Sub B_\rh(x, 3\ka^2c_3r)\Sub Q\cdot \xi^{3\ka^2c_3r}_x. $$
By \eqref{qce2} we have
\[
\frac{ |Q \cdot \xi^{3\ka^2c_3r}_x|}{|\xi|}
= \frac{ |Q|^n | \xi^{3\ka^2c_3r}_x|}{|\xi|}
\le c:=Q^{2n}(c_1)^2 3\ka^2c_3.
\]
Since $\xi$ and $Q \cdot \xi^{3\ka^2c_3r}_x$ have the same center, Theorem \ref{t2.x1} yields
$$
\eta\Sub Q \cdot \xi_{x}^{3\ka^2c_3r}\Sub \frac{ |Q \cdot \xi^{3\ka^2c_3r}_x|}{|\xi|}\cdot \xi \Sub c \cdot \xi,$$
which completes the proof of Lemma \ref{p3.x1}.
\end{proof}

Next we show that the inner property holds automatically for quasi-convex and $1$-Ahlfors-regular quasi-distances.

\begin{theorem}\label{auto}
Let $\rho$ be a quasi-distance on $\R^n$  which is quasi-convex and $1$-Ahlfors-regular. Then $\rho$ satisfies the inner property.
\end{theorem}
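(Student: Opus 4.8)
The plan is to invoke Lemma \ref{p3.x3}, which reduces the inner property to producing constants $a_1,b_1>0$ such that $a_1\lambda^{b_1}\cdot\xi_x^r\Sub\xi_x^{\lambda r}$ for all $x\in\R^n$, $r>0$, $\la\ge1$. Since $\xi_x^r$ and $\xi_x^{\lambda r}$ share the center $x$, and an ellipsoid centered at $c$ is contained in another ellipsoid centered at $c$ precisely when, in every direction, the half-width of the first is at most that of the second, it suffices to prove
\[
w_x^{\lambda r}(u)\ge a_1\lambda^{b_1}\,w_x^r(u)\qquad\text{for all }x\in\R^n,\ r>0,\ \la\ge1,\ |u|=1,
\]
where $w_x^s(u):=\max\{\ell\ge0:x+\ell u\in\xi_x^s\}$ is the half-width of $\xi_x^s$ along $u$. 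Note $w_x^s(u)>0$ because $\xi_x^s$ is a genuine ellipsoid, and $w_x^s(-u)=w_x^s(u)$ by central symmetry.

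I would fix $x$, $r$, $u$ and run a chaining argument along the ray $\{x+\ell u:\ell\ge0\}$ at geometrically growing scales. Set $\ell_0:=w_x^r(u)$, and define points $p_j=x+D_ju$ and scales $s_j$ by $p_1:=x+\ell_0u$, $s_1:=r$, and $p_{j+1}:=p_j+w_{p_j}^{s_j}(u)\,u$, $s_{j+1}:=2\ka s_j$, so $s_j=(2\ka)^{j-1}r$. Since $\xi_{p_j}^{s_j}\Sub B_\rho(p_j,s_j)$ by quasi-convexity and $p_{j+1}\in\xi_{p_j}^{s_j}$, we get $\rho(p_j,p_{j+1})<s_j$, whence $\rho(x,p_{j+1})\le\ka(\rho(x,p_j)+\rho(p_j,p_{j+1}))<s_{j+1}$; inductively $p_j\in B_\rho(x,s_j)\Sub Q\cdot\xi_x^{s_j}$, and therefore $D_j\le Q\,w_x^{s_j}(u)$. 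Also $p_j$ stays on the ray by construction, so $D_j=\langle p_j-x,u\rangle$.

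The crux is the self-improving estimate $D_{j+1}\ge(1+1/c)D_j$. Because $x\in\xi_x^{s_j}\cap B_\rho(p_j,s_j)$ and $|\xi_x^{s_j}|\sim|\xi_{p_j}^{s_j}|\sim s_j$ by quasi-convexity and $1$-Ahlfors-regularity, Lemma \ref{p3.x1} applied with $\xi=\xi_{p_j}^{s_j}$ and $\eta=\xi_x^{s_j}$ gives $\xi_x^{s_j}\Sub c\cdot\xi_{p_j}^{s_j}$ for a constant $c\ge1$ depending only on $\ka$, $Q$ and the regularity constant. The ellipsoid $c\cdot\xi_{p_j}^{s_j}$ is centered at $p_j$ and, by central symmetry of $\xi_x^{s_j}$ about $x$, it contains $x-w_x^{s_j}(u)u=p_j-(D_j+w_x^{s_j}(u))u$; hence its half-width along $u$, namely $c\,w_{p_j}^{s_j}(u)$, is at least $D_j+w_x^{s_j}(u)\ge D_j$, so $w_{p_j}^{s_j}(u)\ge D_j/c$ and $D_{j+1}\ge(1+1/c)D_j$. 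With $D_1=\ell_0$ this yields $D_j\ge\ell_0(1+1/c)^{j-1}$. To conclude, given $\la\ge1$ take the largest $j\ge1$ with $(2\ka)^{j-1}\le\la$, so $s_j\le\la r$ and $j-1>\log_{2\ka}\la-1$. From $\xi_x^{s_j}\Sub B_\rho(x,s_j)\Sub B_\rho(x,\la r)\Sub Q\cdot\xi_x^{\la r}$ and the common center we get $\tfrac1Q w_x^{s_j}(u)\le w_x^{\la r}(u)$; combining with $w_x^{s_j}(u)\ge D_j/Q\ge\ell_0(1+1/c)^{j-1}/Q$ gives $w_x^{\la r}(u)\ge a_1\la^{b_1}\ell_0$ with $a_1=(Q^2(1+1/c))^{-1}$ and $b_1=\log_{2\ka}(1+1/c)>0$. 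This is the desired comparison, and Lemma \ref{p3.x3} finishes the proof.

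The main obstacle, and the whole point of the construction, is securing an expansion factor strictly exceeding $1$ at every step of the chain. Comparing the new ellipsoid $\xi_{p_{j+1}}^{s_j}$ against the previous one $\xi_{p_j}^{s_j}$ only produces a factor $2/c$, which may be less than $1$ and then gives no growth at all. The resolution is to compare instead against $\xi_x^{s_j}$ anchored at the original point $x$, whose distance $D_j$ from $p_j$ is precisely the quantity we wish to inflate; this converts Lemma \ref{p3.x1} into the recursion $D_{j+1}\ge(1+1/c)D_j$. After that one only has to play the geometric growth $(1+1/c)^{j-1}$ of the reach against the geometric growth $(2\ka)^{j-1}$ of the scale, which is exactly what turns into the polynomial inner-property estimate.
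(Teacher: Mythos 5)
Your proof is correct, and it reaches the conclusion by a genuinely different route. The paper first establishes a one-step dilation estimate $d\,(B_\rho(x,r)-x)\Sub B_\rho(x,2\kappa r)-x$ for some $d>1$: given $z=d(y-x)+x$ with $y\in B_\rho(x,r)$, it compares $\xi_x^r$ with $\xi_y^r$ (via Lemma \ref{p3.x1}, Lemma \ref{Lemma for help 1}, and the equal-center case of Theorem \ref{t2.x1}) to show that the ``overshoot'' $(d-1)(y-x)$ lands in $B_\rho(y,r)$, so $z\in B_\rho(x,2\kappa r)$ by the quasi-triangle inequality; it then iterates this set-level inclusion over dyadic scales $(2\kappa)^\ell$. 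You instead work at the level of half-widths along a fixed direction $u$ and build a chain $p_1,p_2,\dots$ on the ray, comparing $\xi_x^{s_j}$ against $\xi_{p_j}^{s_j}$ at the growing scales $s_j=(2\kappa)^{j-1}r$; the central-symmetry/half-width bookkeeping cleverly replaces the explicit appeal to Theorem \ref{t2.x1} and Lemma \ref{Lemma for help 1} in the paper's argument, and you then pass back to balls through Lemma \ref{p3.x3} rather than working with balls from the start. Both proofs ultimately rest on Lemma \ref{p3.x1} and the quasi-triangle inequality and both iterate the same dyadic scale sequence; the paper's version is a bit more compact because it packages the iteration into a single ball-dilation step, whereas yours makes the geometry of the growth along each ray explicit. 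Your insight that one must compare against the ellipsoid anchored at the \emph{original} center $x$ (rather than the previous link of the chain) is exactly the mechanism that makes the growth factor strictly exceed $1$, and it plays the same role as the paper's choice of $d$ with $(d-1)c^{n-1}(c_1)^2Q^2=1$.
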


\begin{proof}
First, we will show that there exists $d=d(\rh)>1$  such that
\begin{equation}\label{auto0}
d (B_\rho(x,\,r)-x)\Sub B_\rh(x,\,2\kappa  r)-x.
\end{equation}
Indeed, since $\rho$ is $Q$-quasi-convex, then for any $x\in\R^n$ and $r>0$ there exists an ellipsoid $\xi^r_x\in\Xi_\rho$ such that
$$\xi^r_x-x\Sub B_\rho(x,r)-x\Sub Q(\xi^r_x-x),$$
and for any $y\in B_\rho(x,r)$ there exists an ellipsoid $\xi^r_y\in\Xi_\rho$ such that $$\xi^r_y-y\Sub B_\rho(y,r)-y\Sub Q(\xi^r_y-y).$$
By \eqref{qce2} we have
\[
\frac{1}{(c_1)^2 Q} \le \frac{|\xi^r_x|}{|\xi^r_y|} \le (c_1)^2 Q.
\]
Since $B_\rho(x,r) \cap \xi^r_y \neq\emptyset$, by Lemma \ref{p3.x1}, there exists a positive constant $c$ such that
$\xi^r_y \Sub c \cdot \xi^r_x$. By Lemma \ref{Lemma for help 1} we have $\xi^r_y -y\Sub c  (\xi^r_x-x)$. Hence, by Theorem \ref{t2.x1}
we have
\begin{equation}\label{e4.x3}
c  (\xi^r_x-x) \Sub \frac{|c  \xi^r_x|}{|\xi^r_y|}( \xi^r_y -y) = c^n  (c_1)^2 Q ( \xi^r_y -y).
\end{equation}

Let $d>1$ be such that $(d-1)c^{n-1} (c_1)^2Q^2 = 1$.
Take any $z\in d(B_\rho(x,r)-x)+x$. Let $y\in B_\rho(x,r)$ be such that
$$z=d(y-x)+x=y+(d-1)(y-x).$$
By \eqref{e4.x3} and our choice of $d$ we have
\begin{equation*}
(d-1)Q(\xi^r_x-x)\Sub (d-1)Q c^{n-1} (c_1)^2Q (\xi^r_y-y)\Sub B_\rho(y,r)-y.
\end{equation*}
Since $y-x\in Q(\xi^r_x-x)$, we further deduce that
$$z=y+(d-1)(y-x)\in y+(B_\rho(y,r)-y)=B_\rho(y,r).$$
By the triangle inequality
$$\rho(z,\,x)\le\kappa(\rho(x,\,y)+\rho(y,\,z))\le 2\kappa r.$$
This implies that $z\in B_\rho(x,\,2\kappa r)$ and hence \eqref{auto0} holds.

Now we can verify the inner property of $\rho$. Take $\ez>0$ such that $d=(2\kappa)^\ez$. For any $\la\ge1$, there exists $\ell\in\N_0$ and
$$(2\kappa)^\ell\le \la<(2\kappa)^{\ell+1}.$$
Hence, by \eqref{auto0} it follows that
\begin{align*}
B_\rh(x,\,\la r)-x&\Sup B_\rh(x,\, (2\kappa)^\ell r)-x\Sup d^\ell(B_\rh(x,\,r)-x)
\Sup d^{-1}\la^\ez(B_\rh(x,\,r)-x).
\end{align*}
Therefore, the inner property \eqref{ball inner property} holds with $a=d^{-1}$ and $b=\ez$.
\end{proof}

The main result of the paper shows a $1$-to-$1$ correspondence between equivalence classes of continuous  ellipsoid covers  and quasi-convex, $1$-Ahlfors-regular quasi-distances on $\R^n$.

\begin{theorem}\label{t4.1}
(i) For any  continuous  ellipsoid cover  $\T$, the induced
quasi-distance $\rh_\T$ given by \eqref{e3.10} is quasi-convex and $1$-Ahlfors-regular.

(ii) Conversely, for any quasi-convex and $1$-Ahlfors-regular quasi-distance $\rho$  on $\R^n$, the corresponding family $\Xi=\Xi_\rho$, given by Definition \ref{quasi convex}, is a continuous ellipsoid cover. Moreover, its induced quasi-distance
\begin{equation}\label{rht}
\rh_\Xi(x,\,y):=\inf_{\xi\in\Xi}\{|\xi|:\, x,\,y\in\xi\},\qquad x,y\in\R^n,
\end{equation}
is equivalent to the original quasi-distance $\rh$.
\end{theorem}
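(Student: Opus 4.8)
The plan is to prove (i) and (ii) separately. Part (i) needs no new argument: the quasi-convexity of $\rh_\T$ is precisely Lemma~\ref{p4.1}, and its $1$-Ahlfors-regularity is \eqref{ahlfors} of Proposition~\ref{p3.x2}. So the substance of the theorem is (ii). Here I would start by invoking Theorem~\ref{auto} to conclude that $\rh$ automatically has the inner property, and then rephrase it through Lemma~\ref{p3.x3} as $a_1\la^{b_1}\cdot\xi^r_x\Sub\xi^{\la r}_x$ for all $x\in\Rn$, $r>0$, $\la\ge1$. I would then check that $\Xi_\rho$ fulfils the geometric description of a continuous ellipsoid cover given in the Remark following Lemma~\ref{geo}. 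Its volume condition $a_1r\le|\xi^r_x|\le a_2r$ is immediate, since combining the inclusion \eqref{qce} defining $\xi^r_x$ with $1$-Ahlfors-regularity yields exactly \eqref{qce2}. Thus everything reduces to verifying the shape condition \eqref{e3.y1} for arbitrary $\xi=\xi^r_x$, $\eta=\xi^s_y\in\Xi_\rho$ with $|\eta|\le|\xi|$ and $\xi\cap\eta\ne\emp$. The preliminary observations I would record are $\rh(x,y)\ls r$ (take a point of $\xi\cap\eta$ and combine the triangle inequality with the volume bound) and $s/r\sim 1/k$, where $k:=|\xi|/|\eta|\ge1$.

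For the right-hand inclusion of \eqref{e3.y1} I would pick an auxiliary scale $\mu\sim k$, chosen large enough that the inner property forces $\xi^s_y\Sub\xi^{\mu s}_y$ (so $\xi^r_x$ and $\xi^{\mu s}_y$ still share a point) yet small enough that $|\xi^{\mu s}_y|\le|\xi|$; Lemma~\ref{p3.x1} then gives $\xi^{\mu s}_y\Sub c\cdot\xi$, and substituting this into the inner property and using Lemma~\ref{Lemma for help 1} yields $\eta-c_\eta\Sub(c/a_1)\mu^{-b_1}(\xi-c_\xi)$, which is contained in $a_5'k^{-b_1}(\xi-c_\xi)$; so one takes $a_6=b_1$. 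For the left-hand inclusion I would instead dilate $\eta=\xi^s_y$ up to $\xi^{r'}_y$ with $r'\sim r$, so that the inner property gives $a_1(r'/s)^{b_1}(\eta-c_\eta)\Sub\xi^{r'}_y-y$; Lemma~\ref{p3.x1}, applied via the common point $x$, gives $\xi^r_x\Sub c\cdot\xi^{r'}_y$, and Theorem~\ref{t2.x1} lets me reverse the first inclusion up to an explicit volume factor. Tracking the powers of $k$, and noting that $a_4$ may be chosen as large as convenient (since $k^{-a_4}\le1$) so that $a_4\ge a_6$, produces the left-hand inclusion. In both inclusions the regime of bounded $k$ is settled directly by Lemma~\ref{p3.x1} (applied in the appropriate direction), with no need for the inner property. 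With the volume condition and \eqref{e3.y1} in hand, Lemma~\ref{geo} (in the form of the Remark) certifies that $\Xi_\rho$ is a continuous ellipsoid cover.

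It then remains to show $\rh_\Xi\sim\rh$. For $\rh_\Xi(x,y)\ls\rh(x,y)$: from $y\in B_\rh(x,2\rh(x,y))\Sub Q\cdot\xi^{2\rh(x,y)}_x$, the inner property lets me enlarge $\xi^{2\rh(x,y)}_x$ to some $\xi^t_x$ with $t\sim\rh(x,y)$ that absorbs the factor $Q$, so $x,y\in\xi^t_x$ and hence $\rh_\Xi(x,y)\le|\xi^t_x|\ls\rh(x,y)$. Conversely, if $x,y$ both lie in some $\xi=\xi^t_z\in\Xi$ with $|\xi|$ within a factor of $2$ of $\rh_\Xi(x,y)$, then $x,y\in\xi\Sub B_\rh(z,t)$, together with the triangle inequality and the volume bound, give $\rh(x,y)<2\ka t\ls|\xi|\ls\rh_\Xi(x,y)$.

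The step I anticipate as the main obstacle is the verification of the shape condition \eqref{e3.y1}. The difficulty is that the ellipsoids $\xi^r_x$ of $\Xi_\rho$ are not nested in $r$ --- only the balls $B_\rh(x,r)$ are --- so two of them cannot be compared directly and one must route the comparison through auxiliary enlargements; moreover the resulting containments come, a priori, only with a constant, and upgrading that constant to a genuine positive power of $|\eta|/|\xi|$ is precisely what the inner property, and hence Theorem~\ref{auto}, is needed for.
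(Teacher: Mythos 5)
Your proposal is correct and its overall architecture coincides with the paper's: reduce to the geometric form of the shape condition via Lemma~\ref{geo}, invoke Theorem~\ref{auto} to harvest the inner property, derive \eqref{e3.y1} from Lemma~\ref{p3.x1} and Theorem~\ref{t2.x1}, and then prove the two-sided equivalence $\rh\sim\rh_\Xi$. You also correctly identify that the whole substance is in the shape condition and that the inner property is what upgrades the a~priori constant comparability of intersecting ellipsoids to a power of the volume ratio.

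The one place you diverge from the paper is the left-hand inclusion of \eqref{e3.y1}, and here your route is a detour. The paper gets this inclusion \emph{without} the inner property: from Lemma~\ref{p3.x1} one has $\eta\Sub c\cdot\xi$, so $\eta-c_\eta\Sub c(\xi-c_\xi)$ by Lemma~\ref{Lemma for help 1}, and a single application of the centered case of Theorem~\ref{t2.x1} reverses this to $c^{1-n}\frac{|\eta|}{|\xi|}(\xi-c_\xi)\Sub\eta-c_\eta$, i.e.\ $a_4=1$ directly. Your plan instead enlarges $\eta$ to an auxiliary $\xi^{r'}_y$ with $r'\sim r$, applies the inner property, and then reverses with Theorem~\ref{t2.x1}; as you note, this produces a multiplier $\sim k^{1-(n-1)b_1}$ whose exponent may be negative, so you need the (correct, but extra) observation that $a_4$ can always be inflated because $|\eta|/|\xi|\le 1$. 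This works, but it spends the inner property where none is needed and obscures the fact that $a_4=1$ always. Conversely, your right-hand inclusion (auxiliary scale $\mu\sim k$, Lemma~\ref{p3.x1}, inner property) is exactly the paper's Case~1 with $\mu=r/s$, and your separate handling of bounded $k$ matches the paper's Case~2. For the final equivalence the paper uses Lemma~\ref{l4.3} (a consequence of the just-established shape condition) to absorb the factor $Q$, whereas you use the inner property directly; both are fine.
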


\begin{proof}
Part (i) follows by Proposition \ref{p3.x2} and Lemma \ref{p4.1}. Moreover, by Theorem \ref{auto} we can deduce that $\rho_\T$ has the inner property.

To prove (ii), we first verify that a family $\Xi_\rh$ induced by quasi-distance $\rh$ is a continuous ellipsoid cover. By \eqref{qce2}, there exists a constant $c_1>0$
such that for any $x\in\Rn$ and $r>0$,
\begin{equation}\label{qce4}
\frac1{Q^nc_1}r\le |\xi^r_x|\le c_1 r.
\end{equation}
Therefore, by letting $\theta_{x,\,t}:=\xi^r_x$ with $t=-\log_2r$, we obtain \eqref{volum} for $a_1:=\frac1{Q^nc_1}$ and $a_2:=c_1$.

To show that $\Xi_\rh$ satisfies the shape condition \eqref{e3.y6}, by Lemma \ref{geo} it suffices to verify \eqref{e3.y1}. Consider two ellipsoids $\xi=\xi^r_x$, $\eta=\xi^s_y$ in $\Xi_\rh$, where  $x,\,y\in\Rn$, $r, s>0$, such that  $\xi\cap\eta \neq \emp$ and $|\eta| \le |\xi|$. By Lemma \ref{p3.x1}, there exists a constant $c>1$ such that $\eta\Sub c \cdot \xi$. Hence,
by Lemma \ref{Lemma for help 1}, we have
\begin{equation}\label{qce6}
\eta-y\Sub c\cdot \xi-x = c(\xi -x).
\end{equation}
Applying Theorem \ref{t2.x1} yields
$$
c(\xi-x)\Sub \frac{|c \xi|}{|\eta|}(\eta-y)=c^{n}\frac{|\xi|}{|\eta|}(\eta-y).
$$
Thus,
$$
c^{1-n} \frac{|\eta|}{|\xi|} (\xi-x) \Sub \eta-y.
$$
This shows the left-hand side inclusion of \eqref{e3.y1} with $a_3':=c^{1-n}$ and $a'_4:=1$.


Next we show the right-hand side inclusion of \eqref{e3.y1}. By Theorem \ref{auto} quasi-distance $\rho$ satisfies the inner property. Hence, by Lemma \ref{p3.x3} there exists positive constants $a_1$ and $b_1$ such that \eqref{inner property} holds. Note that we necessarily have $a_1\le 1$ by letting $\lambda=1$. Assume first that
\begin{equation}\label{qce7}
\frac{r}{s} > {a_1}^{-1/{b_1}} \ge 1.
\end{equation}
Then the inner property \eqref{inner property} for $\lambda=r/s$, implies
\begin{equation}\label{qce5}
\xi^s_y \Sub a_1 \lambda^{b_1} \cdot \xi^s_y \Sub \xi^{\lambda s}_y =\xi^r_y.
\end{equation}
Hence, $\xi^r_x \cap \xi^r_y \ne \emptyset$. Moreover, by \eqref{qce4}
\[
|\xi^r_y| \le c_1 r \le (c_1)^2 Q^n |\xi^r_x|.
\]
Hence, by Lemma \ref{p3.x1} applied for $c_2=(c_1)^2 Q^n $, there exists a constant $c'$ such that  that $\xi^r_y \Sub c' \cdot \xi^r_x$.  Combining this with \eqref{qce5} and Lemma \ref{Lemma for help 1} we have
\begin{equation}\label{qce11}
a_1 \bigg(\frac{r}{s} \bigg) ^{b_1} (\xi^s_y-y) \Sub c'(\xi^r_x -x).
\end{equation}
On other hand, by \eqref{qce4} we have
\begin{equation}\label{qce8}
\frac{|\xi^r_x|}{|\xi^s_y|}\le (c_1)^2 Q^n \frac{r}s.
\end{equation}
Therefore, remembering that $\xi=\xi^r_x$ and $\eta=\xi^s_y$, \eqref{qce11} and \eqref{qce8} imply that
\[
\eta - y \Sub  ((c_1)^2 Q^n )^{b_1} \frac{c'}{a_1} \bigg( \frac{|\eta|}{|\xi|} \bigg)^{b_1} (\xi -x ).
\]
This shows the left-hand side inclusion of \eqref{e3.y1} with $a_5':=((c_1)^2 Q^n )^{b_1} \frac{c'}{a_1} $ and $a_6:=b_1$ under the assumption \eqref{qce7}.

Next assume that ${r}/{s} \le  {a_1}^{-1/{b_1}}$. Then, by \eqref{qce8}
\[
 \bigg( \frac{|\eta|}{|\xi|} \bigg)^{b_1} \ge a_1 ((c_1)^2 Q^n)^{-b_1}.
\]
Combing this with \eqref{qce6} implies
\[
\eta- y \Sub  ((c_1)^2 Q^n )^{b_1} \frac{c}{a_1} \bigg( \frac{|\eta|}{|\xi|} \bigg)^{b_1} (\xi -x ).
\]
Again we have deduced the left-hand side inclusion of \eqref{e3.y1} albeit with $a_5':=((c_1)^2 Q^n )^{b_1} \frac{c}{a_1} $. By Lemma \ref{geo} we conclude  that  $\Xi_\rh$ is a continuous ellipsoid cover.

Finally we prove the equivalence of $\rho$ and $\rho_\Xi$.  For every $x\in \R^n$ and $t\in \R$ we set $\tilde \theta_{x,t} = \xi_x^r$, where $r=2^{-t}$. We have just shown that
\[
\Xi_\rho=\{\tilde \theta_{x,t}: x\in \R^n, t\in\R\}
\]
is a continuous ellipsoid cover.

Take any $x \ne y\in\Rn$. Let $r=\rho(x,\,y)/2$ and $t=-\log_2 r$. By the quasi-convex property of $\rh$, there exists a constant $Q\ge1$ such that
$$\tilde \theta_{x,\,t} = \xi^r_x \Sub B_\rh(x,\,r)\Sub Q\cdot \xi^r_x = Q \cdot \tilde \theta_{x,\,t}.$$
By Lemma \ref{l4.3}, there exists a constant $c>0$ such that
$$x,\, y\in B_\rho(x,\,r)\Sub Q\cdot \tilde \theta_{x,\,t}\Sub \tilde \theta_{x,\, t-cQ}.$$
By \eqref{volum}, \eqref{rht}, and $2^{-t}=r=\rh(x,\,y)/2$,
it follows that
\begin{align}\label{e4.x1}
\rh_\Xi(x, y)\le |\tilde \theta_{x,\,t-c Q}|\le a_2 2^{c Q}r=a_2 2^{c Q-1}\rho(x, y).
\end{align}

On the other hand, by the definition of $\rh_\Xi$, there exists an ellipsoid
$\xi^{\wt r}_z\in\Xi_\rho$, $z\in \R^n$, $\tilde r>0$, such that $x,\,y\in\xi^{\wt r}_z$ and $|\xi^{\wt r}_z|\le 2\rh_\Xi(x,\,y)$. Moreover, by the quasi-convexity of $\rho$,
$$\xi^{\wt r}_z\Sub B_\rho(z,\, \wt r)\Sub Q\cdot \xi^{\wt r}_z.$$
Since $x,\, y\in B_\rho(z,\, \wt r)$ and  $\wt r\le c_1|B_\rh(z,\,\wt r)|$ ($\rho$ is $1$-Ahlfors-regular) we have
\begin{equation}\label{e4.x2}
\rho(x, y)\le \kappa [\rho(x, z)+\rho(z, y)]\le 2c_1\kappa |B_\rho(z,\, \wt r)|\le 2c_1\kappa|Q\cdot \xi^{\wt r}_z|\le 4c_1\kappa Q^n\rh_\Xi(x, y).
\end{equation}
Combining \eqref{e4.x1} with \eqref{e4.x2} yields equivalence of quasi-distances $\rho$ and $\rho_\Xi$.
\end{proof}

\section{Applications and examples}\label{s5}

In this section we give applications and examples of quasi-distances illustrating our main result, Theorem \ref{t4.1}.
As a consequence of results about Hardy $H^p(\Theta)$ spaces  with variable anisotropy associated with continuous ellipsoid cover $\Theta$, which were introduced by Dekel, Petrushev, and Weissblat in \cite{DPW}, we deduce the following result.

\begin{theorem}\label{hardy} Suppose that  $\rho$ is a quasi-convex and $1$-Ahlfors-regular quasi-distance on $\R^n$. Then, $\R^n$ equipped with $\rho$ and the Lebesgue measure is a space of homogeneous type for which Hardy space $H^p(\R^n,\rho)$ spaces exists for the entire range $0<p \le 1$. These spaces admit grand maximal function characterization, atomic decomposition, molecular decomposition, and their duals are Campanato spaces. Moreover, there exists a class of Calder\'on-Zygmund singular integral operators which are bounded on $H^p(\R^n,\rho)$ spaces for $0<p \le 1$.
\end{theorem}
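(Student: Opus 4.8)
The plan is to deduce Theorem \ref{hardy} from the correspondence established in Theorem \ref{t4.1} together with the existing $H^p$ theory for continuous ellipsoid covers. First I would invoke Theorem \ref{t4.1}(ii): since $\rho$ is quasi-convex and $1$-Ahlfors-regular, the associated family $\Xi=\Xi_\rho$ from Definition \ref{quasi convex} is a continuous ellipsoid cover, and the quasi-distance $\rho_\Xi$ it induces via \eqref{rht} is equivalent to $\rho$. The $1$-Ahlfors-regularity of $\rho$ forces the doubling condition for Lebesgue measure, so $(\R^n,\rho,dx)$ is a space of homogeneous type in the sense of Coifman--Weiss; the same holds for $(\R^n,\rho_\Xi,dx)$ by Proposition \ref{p3.x2}.

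Next I would set $H^p(\R^n,\rho):=H^p(\Xi_\rho)$, the Hardy space attached to the cover $\Xi_\rho$ in \cite{DPW}, and record the invariance statement that makes this definition legitimate: the Hardy space $H^p(\Theta)$, its grand maximal function characterization, its atomic and molecular decompositions, and the identification of its dual with a Campanato-type space depend only on the equivalence class of $\Theta$, with constants controlled by $\mathbf{p}(\Theta)$. This is because the adapted spaces of test functions and distributions, the grand maximal function, and the notion of a $(p,\infty)$-atom are all defined through the balls $B_{\rho_\Theta}(x,r)$ (equivalently, through the ellipsoids of $\Theta$), and passing to an equivalent cover perturbs these objects only by uniformly bounded dilations. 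Granting this, the structural theorems of \cite{DPW}, \cite{dw}, \cite{abr}, and \cite{bll} --- grand maximal characterization, atomic decomposition, molecular decomposition, Campanato duality, and boundedness of the associated Calder\'on--Zygmund singular integral operators --- carry over to $H^p(\R^n,\rho)$ for the whole range $0<p\le 1$, which is exactly the content of Theorem \ref{hardy}.

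The main obstacle is making the invariance step rigorous: one must verify that two equivalent continuous ellipsoid covers yield the same space of distributions and the same Hardy space with comparable quasi-norms, tracking how the constructions of \cite{DPW} depend on $\mathbf{p}(\Theta)$ and on the comparison constants between equivalent covers. Since \cite[Theorem 2.2]{bll} already establishes that every cover is equivalent to a continuous (indeed measurable) one and the cited results are stated in terms of $\mathbf{p}(\Theta)$, this is a routine but necessary bookkeeping task. Once it is in place, Theorem \ref{hardy} follows immediately by combining Theorem \ref{t4.1}(ii) with the cited results.
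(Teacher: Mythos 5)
Your proposal matches the paper's approach exactly: both define $H^p(\R^n,\rho):=H^p(\Xi_\rho)$ by invoking Theorem \ref{t4.1}(ii) to produce the continuous ellipsoid cover $\Xi_\rho$, and then import the structural results of \cite{DPW}, \cite{dw}, \cite{abr}, \cite{bll}. Your added discussion of invariance under equivalence of covers is a sensible clarification (since $\Xi_\rho$ is not uniquely determined by $\rho$), but it does not alter the route, which the paper treats as immediate.
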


To wit Theorem \ref{hardy} we define $H^p(\R^n,\rho)$ as the anisotropic Hardy space $H^p(\Xi_\rho)$,  where $\Xi_\rho$ is a continuous ellipsoid cover corresponding to quasi-distance $\rho$ as in Theorem \ref{t4.1}. Consequently, $H^p(\R^n,\rho)=H^p(\Xi_\rho)$ enjoys all properties of Hardy spaces with variable anisotropy shown in \cite{abr, bll, DPW, dw}.

Our first example involves a family of ellipses $\T_0:=\{\theta_{x,\,t}:\ x\in\R^2,\, t\in\R\}$ with
$$\theta_{x,\,t}:= \lf\{z=(z_1,\,z_2)\in\R^2:\ \frac{(z_1-x_1)^2}{\sz^2_1}
+\frac{(z_2-x_2)^2}{\sz^2_2}\le 1\r\}$$
where semi-axes $\sigma_1$ and $\sigma_2$ are given by the following table:

\begin{table}[h]
  \centering
  \linespread{2.5}
 \resizebox{0.5\hsize}{!}{
    \begin{tabular}{|c|c|c|c|}
    \hline
    $t$  & $x_2$ & $\sz_1$ & $\sz_2$ \\
    \hline
    $t\le 0$ & $\R$ & $2^{-\frac t2}$ & $2^{-\frac t2}$ \\
    \hline
    $t>0$ & $|x_2|>2^{-\frac t3} $ & $2^{-\frac t2}$ & $2^{-\frac{t}2}$ \\
    \hline
      $t>0$ & $2^{-\frac t2}<|x_2|\le 2^{-\frac t3}$ & $2^{-\frac{5t}6}\frac1{|x_2|}$  & $2^{-\frac t6}|x_2|$ \\
    \hline
    $t>0$ & $|x_2|\le 2^{-\frac t2}$& $2^{-\frac t3}$ & $2^{-\frac{2t}3}$ \\
    \hline
    \end{tabular}
  }
\end{table}
We will show that $\T_0$ is a continuous ellipsoid cover and give the formula of quasi-norm $\rh_{\T_0}$ induced by $\T_0$. For this we need an elementary lemma.

\begin{lemma}\label{l3.ss1}
Let $a_i,\,\beta_i>0$, $i=1,\,2,\,\dots,\, d$, where $d\ge 2$. Then the root $x>0$ of the equation
$\sum^d_{i=1} a_ix^{\beta_i}=1$, satisfies $x\sim b:=\min_{1\le i\le d} a_i^{-1/\beta_i}$. More precisely,
$$ \min_{1\le i\le d} d^{-1/\beta_i} b<x
  <b. $$
\end{lemma}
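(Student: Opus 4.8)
The plan is to prove the two-sided bound $\min_{1\le i\le d} d^{-1/\beta_i} b < x < b$ directly from the definition of the root, using only monotonicity of each term $a_i x^{\beta_i}$ in $x$. First I would establish existence and uniqueness of the root: the function $f(x) := \sum_{i=1}^d a_i x^{\beta_i}$ is continuous and strictly increasing on $(0,\infty)$, with $f(0^+)=0$ and $f(\infty)=\infty$, so there is exactly one $x>0$ with $f(x)=1$.

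For the \emph{upper bound} $x<b$, I would argue by contradiction or directly: suppose $x \ge b$. Since $b = \min_j a_j^{-1/\beta_j}$, there is some index $i$ with $b = a_i^{-1/\beta_i}$, i.e. $a_i b^{\beta_i} = 1$. Then, because $x \ge b > 0$ and $\beta_i>0$, we get $a_i x^{\beta_i} \ge a_i b^{\beta_i} = 1$, hence $f(x) \ge a_i x^{\beta_i} \ge 1$, with strict inequality unless $x=b$; but then the remaining (positive) terms force $f(x) > 1$ strictly, contradicting $f(x)=1$. So $x<b$.

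For the \emph{lower bound}, set $c := \min_{1\le i\le d} d^{-1/\beta_i} b$, so that $c \le d^{-1/\beta_i} b$ for every $i$, equivalently $c^{\beta_i} \le d^{-1} b^{\beta_i}$ for every $i$ (using $\beta_i>0$ and that the map $u\mapsto u^{\beta_i}$ is increasing). Suppose $x \le c$. Then for each $i$, $a_i x^{\beta_i} \le a_i c^{\beta_i} \le d^{-1} a_i b^{\beta_i}$. Now $a_i b^{\beta_i} \le 1$ for every $i$: indeed $b \le a_i^{-1/\beta_i}$ gives $a_i b^{\beta_i}\le 1$. Summing over $i$ yields $f(x) \le \sum_{i=1}^d d^{-1} a_i b^{\beta_i} \le \sum_{i=1}^d d^{-1} = 1$; since $d\ge 2$ at least one $a_i b^{\beta_i}$ is $<1$ or the bound $x\le c$ is not tight, so in fact $f(x)<1=f(x)$, a contradiction. (Here a small amount of care pins down strictness: if $x=c$ and all inequalities were equalities we would need $a_i b^{\beta_i}=1$ for all $i$, which is impossible when $d\ge2$ unless $c=b$, but $c \le d^{-1/\beta_i}b < b$.) Hence $x>c$, which is the claimed lower bound, and the asymptotic statement $x\sim b$ follows since the ratio $x/b$ lies in a fixed interval bounded away from $0$ and $1$ depending only on $d$ and the $\beta_i$.

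I do not expect a genuine obstacle here; the only delicate point is handling the strict inequalities cleanly (the statement asserts strict $<$ on both sides), which is why the argument is phrased via contradiction so that equality cases are immediately excluded by the surviving positive terms, using $d\ge 2$.
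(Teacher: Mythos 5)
Your argument is essentially the same as the paper's: introduce $f(y)=\sum_i a_i y^{\beta_i}$, note it is strictly increasing, and compare $f$ at $b$ and at $c:=\min_i d^{-1/\beta_i}b$ using the elementary bounds $a_i b^{\beta_i}\le 1$ and $c^{\beta_i}\le d^{-1}b^{\beta_i}$. One small remark on the strictness hedge at the end of your lower-bound argument: the claim that ``$a_i b^{\beta_i}=1$ for all $i$ is impossible when $d\ge 2$ unless $c=b$'' is not right---it happens exactly when all $a_i^{-1/\beta_i}$ coincide, irrespective of $c$; and indeed in the fully degenerate case (all $a_i$ equal and all $\beta_i$ equal) one has $x=c$, so the strict inequality $c<x$ fails. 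The paper's own proof glosses over the same point, and for the application only the non-strict two-sided comparability $x\sim b$ is used, so this does not affect the result.
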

\begin{proof}
For $y>0$ define $f(y)=\sum_{1\le i\le d}a_iy^{\beta_i}$. It is easy to see that
$f$ is strictly increasing and $f(x)=1<f(b)$, which implies that $x<b$. Since
$$ f \bigg (\min_{1\le j\le d} d^{-1/\beta_j}b \bigg )\le \frac1d\sum^d_{i=1} a_i \min_{1\le j\le d}\frac1{a_j}<1=f(x),$$
we deduce that $\min_{1\le i\le d}d^{-1/\beta_i}b<x$.
\end{proof}

\begin{prop}\label{p3.z1}
$\T_0$ with  is a continuous ellipsoid cover in the sense of Definition
\ref{d3.x1}.
\end{prop}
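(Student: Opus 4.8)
The plan is to verify the two defining conditions of Definition \ref{d3.x1} directly from the table. For condition (i), I would compute $|\theta_{x,t}| = \pi\sigma_1\sigma_2$ in each of the four regimes of the table and check it is comparable to $2^{-t}$. In the first two rows $\sigma_1\sigma_2 = 2^{-t}$, in the third row $\sigma_1\sigma_2 = 2^{-5t/6-t/6} = 2^{-t}$, and in the fourth row $\sigma_1\sigma_2 = 2^{-t/3 - 2t/3} = 2^{-t}$. So in fact $|\theta_{x,t}| = \pi 2^{-t}$ exactly, and (i) holds with $a_1 = a_2 = \pi$. This step is routine.

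The main work is condition (ii), the shape condition \eqref{e3.y6}, which by Lemma \ref{geo} I may replace by the equivalent geometric formulation \eqref{e3.y1}: for intersecting ellipses $\eta, \xi \in \T_0$ with $|\eta| \le |\xi|$, one needs $a_3'(|\eta|/|\xi|)^{a_4}(\xi-c_\xi) \Sub \eta - c_\eta \Sub a_5'(|\eta|/|\xi|)^{a_6}(\xi-c_\xi)$. Since all our ellipses are axis-parallel with $\xi - c_\xi$ determined by its two semi-axes $(\sigma_1,\sigma_2)$, the inclusion $\eta-c_\eta \Sub \lambda(\xi-c_\xi)$ is simply the pair of scalar inequalities $\sigma_1(\eta) \le \lambda\sigma_1(\xi)$ and $\sigma_2(\eta) \le \lambda\sigma_2(\xi)$. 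So \eqref{e3.y1} reduces to: for each coordinate $j=1,2$,
\begin{equation*}
a_3'\Big(\frac{|\eta|}{|\xi|}\Big)^{a_4} \le \frac{\sigma_j(\eta)}{\sigma_j(\xi)} \le a_5'\Big(\frac{|\eta|}{|\xi|}\Big)^{a_6},
\end{equation*}
equivalently, writing $\eta = \theta_{y,t+s}$, $\xi = \theta_{x,t}$ with $s \ge -s_0$ (the intersection forcing $|\eta| \lesssim |\xi|$), one needs $\sigma_j(\theta_{y,t+s})/\sigma_j(\theta_{x,t})$ trapped between $a_3' 2^{-a_4 s}$ and $a_5' 2^{-a_6 s}$. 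Note each semi-axis in the table is of the form $2^{-\alpha t}$ or $2^{-\alpha t}|x_2|^{\pm 1}$ with $\alpha \in \{1/2, 5/6, 1/6, 1/3, 2/3\}$, and the region boundaries are $|x_2| \sim 2^{-t/2}$ and $|x_2| \sim 2^{-t/3}$.

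The heart of the matter, and the step I expect to be the main obstacle, is controlling how $|y_2|$ can differ from $|x_2|$ when $\theta_{x,t} \cap \theta_{y,t+s} \ne \emptyset$. The key geometric input is: the intersection forces $|x - y| \lesssim \operatorname{diam}(\theta_{x,t}) \lesssim 2^{-t/3}$ (the largest any semi-axis gets at scale $t$ for $t>0$ is $2^{-t/3}$), hence $\big||x_2| - |y_2|\big| \lesssim 2^{-t/3}$; combined with $s \ge -s_0$ this lets me compare which rows of the table $x$ and $y$ fall into and conclude $|y_2| \sim \max(|x_2|, 2^{-(t+s)/2})$ up to constants, or a similar controlled relation. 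Once the relationship between $|x_2|$, $|y_2|$, $t$, and $s$ is pinned down in each of the (finitely many) pairs of regimes, verifying the ratio bounds for $\sigma_1$ and $\sigma_2$ becomes a finite check of elementary inequalities involving the exponents $\tfrac12, \tfrac56, \tfrac16, \tfrac13, \tfrac23$, yielding explicit $a_3', a_5'$ and exponents $a_4 = \tfrac13$ (or so) and $a_6$ determined by the slowest-shrinking/fastest-growing semi-axis ratio. I would organize this as a short case analysis driven by whether $x$ and $y$ lie in the ``isotropic'' region ($t \le 0$ or $|x_2| > 2^{-t/3}$), the ``transition'' region, or the ``most anisotropic'' region, using the displacement bound $|x-y| \lesssim 2^{-t/3}$ throughout to show neighboring points cannot be in wildly different regimes.
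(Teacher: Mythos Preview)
Your overall plan is the same as the paper's: since every $M_{x,t}$ is diagonal, both \eqref{e3.y6} and the geometric form \eqref{e3.y1} reduce to bounding the coordinate-wise ratios $\sigma_j(\theta_{y,t+s})/\sigma_j(\theta_{x,t})$, and the proof becomes a finite case analysis over pairs of rows in the table. Your verification of (i) is correct (in fact $|\theta_{x,t}|=\pi 2^{-t}$ in every case, so $|\eta|/|\xi|=2^{-s}$ exactly).

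There is, however, a real gap in your ``key geometric input.'' The bound $\big||x_2|-|y_2|\big|\lesssim 2^{-t/3}$ obtained from the full diameter is too crude to close the mixed-regime cases. Concretely, take $t>0$, $s\ge 0$, $x$ in the transition row ($2^{-t/2}<|x_2|\le 2^{-t/3}$) and $y$ in the bottom row ($|y_2|\le 2^{-(t+s)/2}$). Then
\[
\frac{\sigma_1(\theta_{y,t+s})}{\sigma_1(\theta_{x,t})}
=\frac{2^{-(t+s)/3}}{2^{-5t/6}/|x_2|}=2^{t/2-s/3}|x_2|.
\]
Your diameter bound only gives $|x_2|\lesssim 2^{-t/3}$ (which you already knew from the regime), hence this ratio is $\lesssim 2^{t/6-s/3}$, unbounded in $t$. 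What actually makes the case work is the \emph{coordinate-wise} intersection bound for axis-parallel ellipses,
\[
|x_2-y_2|\le \sigma_2(\theta_{x,t})+\sigma_2(\theta_{y,t+s})
=2^{-t/6}|x_2|+2^{-2(t+s)/3}\le 2^{-t/2}+2^{-2(t+s)/3},
\]
which together with $|y_2|\le 2^{-(t+s)/2}$ forces $|x_2|\le 3\cdot 2^{-t/2}$ and hence $\sigma_1(\theta_{y,t+s})/\sigma_1(\theta_{x,t})\le 3\cdot 2^{-s/3}$. This is exactly the computation the paper carries out (its Case~1). The same issue arises with the roles of $x$ and $y$ reversed (the paper's Case~2). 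So replace your single displacement bound $|x-y|\lesssim 2^{-t/3}$ by the pair $|x_j-y_j|\le\sigma_j(\theta_{x,t})+\sigma_j(\theta_{y,t+s})$ and use the second-coordinate inequality; with that correction your plan goes through.
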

\begin{proof}
It is obvious that $\T_0$ satisfies Definition \ref{d3.x1}(i). We only need to show that any two intersecting ellipsoids $\theta_{x,\,t},\,\theta_{y,\,t+s}\in\T_0$  satisfy Definition \ref{d3.x1}(ii), where $t\in\R$ and $s\ge0$. We shall verify two typical cases while other cases are similar or trivial. Denote by $\sz_2$ the vertical semi-axis of $\theta_{x,\,t}$ and
by $\sz'_2$ the vertical semi-axis of $\theta_{y,\,t+s}$.

{\bf Case 1.} Suppose that  $2^{-t/2}<|x_2|\le 2^{-t/3}$
and $|y_2|\le 2^{-(t+s)/2}$, where $t>0$ and $s\ge 0$. Then, we have
$$M_{x,\,t}=\diag(2^{-5t/6}/|x_2|,\, 2^{-t/6}|x_2|),\ M_{y,\,t+s}=\diag(2^{-(t+s)/3},\,2^{-2(t+s)/3}).$$
By $\theta_{x,\,t}\cap\theta_{y,\,t+s}\not=\emptyset$, $|y_2|\le 2^{-(t+s)/2}$,
$\sz_2=2^{-t/6}|x_2|$, $\sz'_2=2^{-2(t+s)/3}$, $|x_2|\le 2^{-t/3}$, $t>0$ and $s\ge0$,
we know that
$$|x_2|\le |y_2|+\sz_2+\sz'_2\le  2^{-(t+s)/2}+2^{-t/2}+2^{-2(t+s)/3}\le 3\cdot2^{-t/2}.$$
From this and $|x_2|>2^{-t/2}$, it follows that
\[
\|(M_{x,\,t})^{-1}M_{y,\,t+s}\|=\|\diag(2^{t/2-s/3}|x_2|,\, 2^{-t/2-2s/3}/|x_2|)\|\le
\|\diag(3\cdot2^{-s/3},\, 2^{-2s/3})\|\le 3\cdot 2^{-s/3}
\]
and
\begin{align*}
\|(M_{y,\,t+s})^{-1}M_{x,\,t}\|=\|\diag(2^{-t/2+s/3}/|x_2|,\, 2^{t/2+2s/3}|x_2|)\|\le
\|\diag(2^{s/3},\,3\cdot2^{2s/3} \|\le 3\cdot 2^{2s/3}.
\end{align*}

{\bf Case 2.} Suppose that $|x_2|\le 2^{-t/2}$
and $2^{-(t+s)/2}<|y_2|\le 2^{-(t+s)/3}$, where $t>0$ and $s\ge 0$. Then, we have
$$ M_{x,\,t}=\diag(2^{-t/3},\,2^{-2t/3}),\ M_{y,\,t+s}=\diag(2^{-5(t+s)/6}/|y_2|,\, 2^{-(t+s)/6}|y_2|).$$
By $\theta_{x,\,t}\cap\theta_{y,\,t+s}\not=\emptyset$, $|x_2|\le 2^{-t/2}$,
$\sz_2=2^{-2t/3}$, $\sz'_2=2^{-(t+s)/6}|y_2|$, $|y_2|\le 2^{-(t+s)/3}$, $t>0$ and $s\ge0$,
we know that
$$|y_2|\le |x_2|+\sz_2+\sz'_2\le  2^{-t/2}+2^{-2t/3}+2^{-(t+s)/2}\le 3\cdot2^{-t/2}.$$
From this and $|y_2|>2^{-(t+s)/2}$, it follows that
\begin{align*}
\|(M_{x,\,t})^{-1}M_{y,\,t+s}\|=\|\diag(2^{-t/2-5s/6}/|y_2|,\, 2^{t/2-s/6}|y_2|)\|\le
\|\diag(2^{-s/3},\, 3\cdot2^{-s/6})\|\le 3\cdot 2^{-s/6}
\end{align*}
and
\begin{align*}
\|(M_{y,\,t+s})^{-1}M_{x,\,t}\|=\|\diag(2^{t/2+5s/6}|y_2|,\, 2^{-t/2+s/6}/|y_2|)\|\le
\|\diag(3\cdot2^{5s/6},\, 2^{2s/3})\|\le 3\cdot 2^{5s/6}.
\end{align*}
\end{proof}

\begin{prop}\label{p3.z2}
The quasi-distance $\rho_{\Theta_0}$ induced by the ellipsoid cover $\T_0$ satisfies
$$
\rh_{\T_0}(x,\,y)\sim
\lf\{
\begin{array}{ll}
 |x-y|^2 &   |x-y|\ge 1 \ \mathrm{or} \ |x-y|^{\frac23}<|x_2|,\\
\lf[(y_1-x_1)^2+\sqrt{(y_1-x_1)^4+4(y_2-x_2)^2}\r]^{\frac34} & |x-y|<1\ \&\ |x_2|\le\varphi(x,\,y), \\
\max\{(x_1-y_1)^{\frac65}|x_2|^{\frac65} ,\,(x_2-y_2)^6|x_2|^{-6}\} &|x-y|<1\\
 &\&\, \varphi(x,\,y)<|x_2|\le |x-y|^{2/3},
\end{array}
\r.
$$
where $\varphi(x,\,y):=2^{-\frac34}\lf[(y_1-x_1)^2+\sqrt{(y_1-x_1)^4+4(y_2-x_2)^2}\r]^{\frac34}.$
\end{prop}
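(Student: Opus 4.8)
Given that $\T_0$ is a continuous ellipsoid cover (Proposition~\ref{p3.z1}), the plan is first to reduce, via Proposition~\ref{p3.x4}, to the two one‑sided quantities $\rho_1(x,y)=\inf\{|\theta_{x,t}|:\,y\in\theta_{x,t}\}$ and $\rho_2(x,y)=\rho_1(y,x)$, so that $\rho_{\T_0}(x,y)\sim\min\{\rho_1(x,y),\rho_2(x,y)\}$. The structural point is that \emph{every} ellipsoid in the defining table has $|\theta_{x,t}|=\pi\sz_1\sz_2=\pi 2^{-t}$, hence $\rho_1(x,y)=\pi 2^{-t^\ast}$ with $t^\ast=t^\ast(x,y):=\sup\{t\in\R:\,y\in\theta_{x,t}\}$.

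To compute $t^\ast$ I would introduce the \emph{profile function} of $x$. Fix $x$ with $x_2\neq0$ (the line $x_2=0$ is a degenerate limit in which only the last row of the table is ever active for $t>0$). Writing $\sz_1(t),\sz_2(t)$ for the semi‑axes of $\theta_{x,t}$, set
\[
F(t):=\frac{(y_1-x_1)^2}{\sz_1(t)^2}+\frac{(y_2-x_2)^2}{\sz_2(t)^2},
\]
so that $y\in\theta_{x,t}\iff F(t)\le1$. A short check of the table shows that $\sz_1,\sz_2$ match continuously at the two break points $t_1:=3\log_2(1/|x_2|)$ and $t_2:=2\log_2(1/|x_2|)$ (with $t_2<t_1$ when $|x_2|<1$, while for $|x_2|\ge1$ only the round‑disk rows occur for $t>0$), and on each of the resulting $t$‑ranges $F$ is a sum of strictly increasing exponentials in $t$. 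Hence $F$ is continuous and strictly increasing on $\R$, with $F(t)\to0$ as $t\to-\fz$ and $F(t)\to+\fz$ as $t\to+\fz$, so $t^\ast$ is the \emph{unique} solution of $F(t^\ast)=1$.

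It then remains to locate $t^\ast$ among the ranges and solve $F(t^\ast)=1$ there. Evaluating $F$ at the break points gives $F(0)=|x-y|^2$, $F(t_2)=(y_1-x_1)^2|x_2|^{-4/3}+(y_2-x_2)^2|x_2|^{-8/3}$, and $F(t_1)=|x-y|^2|x_2|^{-3}$; a short calculation yields the equivalences $F(0)\le1\iff|x-y|\le1$, $F(t_1)\le1\iff|x_2|\ge|x-y|^{2/3}$, and $F(t_2)\le1\iff|x_2|\ge\varphi(x,y)$ --- the last being the source of $\varphi$, since on the range $0<t<t_2$ one has $\varphi(x,y)=(2^{-t^\ast})^{1/2}$. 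According to which of $F(0)-1$, $F(t_2)-1$, $F(t_1)-1$ are negative, $t^\ast$ lies on one of three pieces, and solving $F(t^\ast)=1$ there gives: on a round‑disk piece ($t\le0$ or $t\ge t_1$), $F(t)=|x-y|^2\,2^{t}$ and $2^{-t^\ast}=|x-y|^2$; on $t_2<t<t_1$, the substitution $w=2^{t^\ast/3}$ turns $F(t^\ast)=1$ into $(y_1-x_1)^2|x_2|^2 w^5+(y_2-x_2)^2|x_2|^{-2}w=1$, whose root Lemma~\ref{l3.ss1} estimates by $w\sim\min\{(|y_1-x_1|\,|x_2|)^{-2/5},(y_2-x_2)^{-2}|x_2|^2\}$, so $2^{-t^\ast}=w^{-3}\sim\max\{(x_1-y_1)^{6/5}|x_2|^{6/5},(x_2-y_2)^6|x_2|^{-6}\}$; and on $0<t<t_2$, the substitution $v=2^{t^\ast/3}$ gives the biquadratic $(y_1-x_1)^2 v^2+(y_2-x_2)^2 v^4=1$, solved in closed form by $v^2=2/[(y_1-x_1)^2+\sqrt{(y_1-x_1)^4+4(y_2-x_2)^2}]$, so $2^{-t^\ast}=v^{-3}$ is, after multiplication by $\pi$, the quantity displayed in the second line of the proposition. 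Multiplying each piece by $\pi$ produces the three displayed formulas for $\rho_1(x,y)$.

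Finally, $\rho_2(x,y)=\rho_1(y,x)$ is given by the same three formulas with $x$ and $y$ interchanged; since $|x_2-y_2|\le|x-y|$, the pair $(y,x)$ falls into the same regime as $(x,y)$ unless $|x_2|$ lies near one of the thresholds $\varphi(x,y)$ or $|x-y|^{2/3}$, and there the competing expressions are mutually comparable --- so the right‑hand side of the proposition is symmetric in $x\leftrightarrow y$ up to an absolute constant, whence $\rho_{\T_0}(x,y)\sim\min\{\rho_1(x,y),\rho_2(x,y)\}\sim\rho_1(x,y)$. I expect this last step --- matching the signs of $F(0)-1$, $F(t_2)-1$, $F(t_1)-1$ to the geometric conditions in the statement, and checking comparability of the candidate values across regime boundaries when passing from $\rho_1$ to $\min\{\rho_1,\rho_2\}$ --- to be the only genuinely delicate part; the remainder is the routine algebra of solving $F(t^\ast)=1$ piece by piece, with Lemma~\ref{l3.ss1} handling the single non‑explicit case.
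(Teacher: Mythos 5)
Your proposal follows essentially the same route as the paper's proof: reduce to $\rho_1(x,y)$ via Proposition~\ref{p3.x4}, use the facts that every ellipse in $\T_0$ has area $\pi 2^{-t}$ and that the family $\{\theta_{x,t}\}_{t\in\R}$ is nested to reduce to finding the unique $t^\ast$ at which $y\in\partial\theta_{x,t^\ast}$, then solve the boundary equation on each of the three $t$-ranges, invoking Lemma~\ref{l3.ss1} for the intermediate one. Your bookkeeping via the monotone profile function $F(t)$ and explicit evaluation at the breakpoints $t_1,t_2$ is a slightly more systematic way to determine which regime $t^\ast$ falls into, but the content is the same.

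Two remarks. First, your closing paragraph comparing $\rho_1(x,y)$ with $\rho_2(x,y)=\rho_1(y,x)$ and arguing the regimes match up is superfluous: the proof of Proposition~\ref{p3.x4} already gives $\rho_{\T_0}\sim\rho_1$ directly (display~\eqref{e3.s1}), which is exactly what the paper invokes, so the symmetry discussion can be dropped. Second, in Case~2 your computation gives $2^{-t^\ast}=v^{-3}$ with $v^2=2/\bigl[(y_1-x_1)^2+\sqrt{(y_1-x_1)^4+4(y_2-x_2)^2}\,\bigr]$, hence $2^{-t^\ast}=2^{-3/2}\bigl[(y_1-x_1)^2+\sqrt{\cdots}\,\bigr]^{3/2}$, i.e.\ exponent $3/2$; you assert this is ``the quantity displayed in the second line of the proposition,'' but the proposition displays exponent $3/4$. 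In fact the paper's own proof arrives at $\rho_1=\pi[\varphi(x,y)]^2\sim[\cdots]^{3/2}$, consistent with your calculation, so the displayed $3/4$ in the statement appears to be a typo rather than a gap in your argument---but you should not have claimed agreement without noticing the mismatch.
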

\begin{proof}
By Proposition \ref{p3.x4}, it suffice to find the formula for
\[
\rh_1(x,\,y):=\inf_{y\in\theta_{x,\,t}\in\T}|\theta_{x,\,t}|.
\]
It is not hard to verify that ellipses in $\T_0$ are nested, i.e., $\theta_{x,\,t_1}\subsetneq \theta_{x,\,t_2}$ for any $x\in\R^2$ and $t_1,\,t_2\in\R$ with $t_1>t_2$. Using this and the fact that ellipses in $\T_0$ are closed, we know that $\rh_{1}(x,\,y)$ equals to the area of an ellipse $\theta_{x,\,t}$ for some $t\in\R$ such that $y$ belongs to the boundary of $\theta_{x,\,t}$, i.e.,
$y\in\partial\theta_{x,\,t}$. Equivalently,
\begin{eqnarray}\label{e3.q1}
\frac{(y_1-x_1)^2}{\sz^2_1}
+\frac{(y_2-x_2)^2}{\sz^2_2}=1.
\end{eqnarray}
We shall consider three cases.

{\bf Case 1.} Suppose that $t\in\R$ and $\theta_{x,\,t}$ is a ball. Since $y\in\partial\theta_{x,\,t}$ we have
\begin{eqnarray}\label{e3.q2}
\frac{(y_1-x_1)^2}{2^{-t}}
+\frac{(y_2-x_2)^2}{2^{-t}}=1\Longleftrightarrow |x-y|=2^{-\frac t2}.
\end{eqnarray}
By the definition of $\T_0$, we know this happens if
 $x,\,y$ satisfy \eqref{e3.q2} for some $t\le 0$ or for some $t>0$ and $|x_2|>2^{-t/3}$. Equivalently, we have either  $|x-y|\ge 1$ or $|x-y|<1$ and $|x_2|>|x-y|^{\frac23}$.
In either of two subcases,
$$\rh_{1}(x,\,y)=|\theta_{x,\,t}|=\pi|x-y|^2.$$

{\bf Case 2.} Suppose that $t>0$, $\sz_1=2^{-t/3}$, and $\sz_2=2^{-2t/3}$. Since $y\in\partial\theta_{x,\,t}$ we have
\begin{eqnarray}\label{e3.q3}
\frac{(y_1-x_1)^2}{2^{-\frac{2t}3}}
+\frac{(y_2-x_2)^2}{2^{-\frac{4t}3}}=1.
\end{eqnarray}
This is equivalent to $|x-y|<1$ and $|x_2|\le 2^{-\frac t2}$, where
$$2^{-\frac t2}=2^{-\frac34}\lf[(y_1-x_1)^2+\sqrt{(y_1-x_1)^4+4(y_2-x_2)^2}\r]^{\frac34}=:\varphi(x,\,y).$$
Therefore, in this case,
$$ \rh_{1}(x,\,y)=|\theta_{x,\,t}|=\pi 2^{-t}=\pi[\varphi(x,\,y)]^2.$$

{\bf Case 3.} Suppose that $t>0$, $\sz_1=2^{-5t/6}/|x_2|$, and $\sz_2=2^{-t/6}|x_2|$.
 Since $y\in\partial\theta_{x,\,t}$ we have
\begin{eqnarray*}
\frac{(y_1-x_1)^2}{[2^{-\frac {5t}6}|x_2|^{-1}]^2}
+\frac{(y_2-x_2)^2}{[2^{-\frac t6}|x_2|]^2}=1.
\end{eqnarray*}
Let $a=(y_1-x_1)^2|x_2|^2$, $b=(y_2-x_2)^2/|x_2|^2$, and $z=2^{t/3}$. Since Case 3 is complementary to Cases 1 and 2, we necessarily have  $|x-y|<1$, $\varphi(x,\,y)<|x_2|\le |x-y|^{2/3}$, and $az^5+bz=1$.

Since $a, b>0$ and $z>1$, by Lemma \ref{l3.ss1}, we have
$$ z=2^{\frac t3}\sim \min\{ (x_1-y_1)^{-\frac25}|x_2|^{-\frac 25},\, (x_2-y_2)^{-2}|x_2|^2     \}.  $$
Thus, we have
$$\rh_{1}(x,\,y)=|\theta_{x,\,t}|=\pi2^{-t}\sim \max\{(x_1-y_1)^{\frac65}|x_2|^{\frac65} ,\,(x_2-y_2)^6|x_2|^{-6}\}.$$

Combining Cases 1--3 with \eqref{e3.s1} shows Proposition \ref{p3.z2}.
\end{proof}

By Theorems \ref{auto} and \ref{t4.1} we deduce that $\rho_{\Theta_0}$ is quasi-convex and $1$-Ahlfors-regular quasi-distance which satisfies the inner property. However, these properties are far from obvious from the formula for $\rho_{\Theta_0}$ in Proposition \ref{p3.z2}

Next we will give an example of a quasi-convex quasi-distance $\rh$, which is not $1$-Ahlfors-regular, but which nevertheless generates a continuous ellipsoid cover. It will be convenient to relax the assumption of symmetry of quasi-distance in Definition \ref{d3.y1}  by the condition $\rh(x,\,y)\le C\rh(y,\,x)$ for any $x,\,y\in\R^n$, see \cite[Section I.2.4]{S}. This formally weaker condition implies that $\rh(x,\,y)\sim\rh(y,\,x)$. Hence, its symmetrization $[\rh(x,\,y)+\rh(y,\,x)]/2$ is a quasi-distance in the sense of Definition \ref{d3.y1}, albeit for (possibly) increased triangle constant $\kappa$.

The following example can be found in the monograph of Stein \cite[Section I.2.6]{S}. It is merely the simplest example of general class of balls and metrics studied by Nagel, Stein, and Wainger \cite{NSW}. Let $k$ be a non-negative integer and, for any $x\in\R^2$ and $\delta>0$, let
\begin{equation}\label{e3.w5}
B_k(x,\,\delta):=\{y\in\R^2:\, |x_1-y_1|<\delta,\,|x_2-y_2|<\max\{\delta^{k+1},\, |x_1|^k\delta\}\}.
\end{equation}
Then balls $\{B_k(x,\,\delta):\, x\in\R^2,\,\delta\in(0,\,\infty)\}$ are a natural family of balls associated with the vector fields $X_1:=\partial/\partial x_1$ and
$X_2:=x^k_1\partial/\partial x_2$. That is, $y\in B_k(x,\delta)$ if one can join $x$ to $y$ along a path whose velocity vector at any point is of the form $a_1X_1+a_2 X_2$, with $|a_1| \le 1$ and $|a_2|\le 1$, in elapsed time $\ls \delta$.
The balls $B_k(x,\,\delta)$ can be equivalently defined as $B_{\rh_k}(x,\,\delta):=\{y\in\R^2:\, \rh_k(y,\,x)<\delta\}$, where
\begin{equation}\label{e3.x1}
\rh_k(y,\,x)=
\lf\{
\begin{array}{ll}
\max\{|y_1-x_1|,\,\min\{|y_2-x_2|^{1/(k+1)},\,|y_2-x_2|/|x_1|^k\}\} & \mathrm{if}\ x_1\not= 0,\\
\max\{|y_1-x_1|,\, |y_2-x_2|^{1/(k+1)} \} & \mathrm{if}\ x_1=0.
\end{array}\r.
\end{equation}

\begin{prop}\label{p3.x5}
Let $k$ be a non-negative integer and let $\rh_k$ be as in \eqref{e3.x1}. Then $\rh_k$ is a quasi-distance which is quasi-convex and satisfies the inner property, but $\rho_k$ is not $1$-Ahlfors-regular.
\end{prop}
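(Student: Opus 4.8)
The plan is to reduce every assertion to the elementary observation that the $\rho_k$-balls are axis-parallel rectangles. First I would unwind \eqref{e3.x1}: using that, for $t\ge 0$,
\[
\min\{t^{1/(k+1)},\,t/|x_1|^k\}<r\ \ifff\ \ t<\max\{r^{k+1},\,|x_1|^k r\}
\]
(with the convention $|x_1|^0=1$, so that the case $x_1=0$ is covered by the same formula), one gets for every $x\in\R^2$ and $r>0$ that
\[
B_{\rho_k}(x,r)=(x_1-r,\,x_1+r)\ti(x_2-h_{x,r},\,x_2+h_{x,r}),\qquad h_{x,r}:=\max\{r^{k+1},\,|x_1|^k r\};
\]
that is, $B_{\rho_k}(x,r)$ coincides with the box $B_k(x,r)$ of \eqref{e3.w5}.

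Next I would check that $\rho_k$ is a quasi-distance. The property $\rho_k(y,x)=0\ifff x=y$ is read directly off \eqref{e3.x1}. For the relaxed symmetry $\rho_k(x,y)\sim\rho_k(y,x)$ one runs a short case analysis: setting $\delta:=\rho_k(y,x)$ one has $|x_1-y_1|\le\delta$, hence $|y_1|\le|x_1|+\delta$, and $|y_2-x_2|\le\max\{\delta^{k+1},|x_1|^k\delta\}$; distinguishing $\delta\le|y_1|$ from $\delta>|y_1|$ yields $|y_2-x_2|\le 2^k\max\{\delta^{k+1},|y_1|^k\delta\}$, whence $\rho_k(x,y)\le 2^k\delta$. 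For the quasi-triangle inequality it suffices, after passing to the symmetrization $[\rho_k(x,y)+\rho_k(y,x)]/2$ as explained in the paragraph preceding the statement, to prove the engulfing property: if $B_k(x,\delta)\cap B_k(y,\delta)\ne\emp$ then $B_k(y,\delta)\Sub B_k(x,K\delta)$ for a constant $K=K(k)$. This is again a direct rectangle computation: the intersection hypothesis gives $|x_1-y_1|<2\delta$ and $|x_2-y_2|<h_{x,\delta}+h_{y,\delta}$; from $|y_1|\le|x_1|+2\delta$ one gets $h_{y,\delta}\ls h_{x,\delta}$ with constant depending only on $k$; and $h_{x,K\delta}\ge K\,h_{x,\delta}$ for $K\ge1$ since $K^{k+1}\ge K$; so $K$ large enough (depending on $k$) forces the inclusion, which by the standard argument yields $\rho_k(z,x)\le K(\rho_k(y,x)+\rho_k(z,y))$ and hence, with the relaxed symmetry, that the symmetrization is a quasi-distance. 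Alternatively one may simply cite \cite{NSW} and \cite[Section I.2.6]{S}, where this family of balls is analyzed.

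The remaining three properties are then immediate. \emph{Quasi-convexity:} let $\xi_x^r$ be the closed ellipse centered at $x$ with horizontal semi-axis $r/\sqrt2$ and vertical semi-axis $h_{x,r}/\sqrt2$; each point $(z_1,z_2)\in\xi_x^r$ satisfies $|z_1-x_1|\le r/\sqrt2<r$ and $|z_2-x_2|\le h_{x,r}/\sqrt2<h_{x,r}$, so $\xi_x^r\Sub B_{\rho_k}(x,r)$, while the closed rectangle with half-widths $r$, $h_{x,r}$ is contained in $2\cdot\xi_x^r$ (its corner, measured from the center, lies on the boundary of $2\cdot\xi_x^r$), so $B_{\rho_k}(x,r)\Sub 2\cdot\xi_x^r$; hence $\rho_k$ is quasi-convex with $Q=2$. \emph{Inner property:} I verify Definition \ref{d3.yy1} directly on balls with $a=b=1$, since for $\la\ge1$
\[
h_{x,\la r}=\max\{\la^{k+1}r^{k+1},\,\la|x_1|^k r\}\ge\la\max\{r^{k+1},\,|x_1|^k r\}=\la\,h_{x,r}
\]
(using $\la^{k+1}\ge\la$), and dilating $B_{\rho_k}(x,r)-x$ by $\la$ multiplies both half-widths by $\la$, so $\la(B_{\rho_k}(x,r)-x)\Sub B_{\rho_k}(x,\la r)-x$. \emph{Failure of $1$-Ahlfors-regularity:} $|B_{\rho_k}(x,r)|=4r\,h_{x,r}=4\max\{r^{k+2},\,|x_1|^k r^2\}\ge 4r^{k+2}$; taking $x=(0,0)$ gives $|B_{\rho_k}(x,r)|=4r^{k+2}$, so $|B_{\rho_k}(x,r)|/r=4r^{k+1}\to\infty$ as $r\to\infty$, which contradicts any bound $|B_{\rho_k}(x,r)|\le c_1 r$.

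The only step demanding genuine care is the engulfing property — equivalently, the quasi-triangle inequality for the symmetrization of $\rho_k$ — which I expect to be the main obstacle; once the balls are recognized as rectangles, quasi-convexity, the inner property, and the failure of $1$-Ahlfors-regularity each follow by a one-line computation.
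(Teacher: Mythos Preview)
Your proposal is correct and follows essentially the same route as the paper: recognize that $B_{\rho_k}(x,r)$ is the axis-parallel rectangle \eqref{e3.w5}, deduce quasi-convexity (the paper takes the inscribed ellipse with semi-axes $r,h_{x,r}$ to get $Q=\sqrt{2}$, you shrink by $\sqrt2$ to get $Q=2$, which is arguably cleaner since the balls are open), verify the inner property with $a=b=1$ via $h_{x,\la r}\ge\la\,h_{x,r}$, and read off failure of $1$-Ahlfors-regularity from $|B_{\rho_k}(x,r)|=4r\,h_{x,r}$. The only substantive difference is that you supply a direct verification of the quasi-distance axioms (relaxed symmetry and the engulfing/quasi-triangle property), whereas the paper simply invokes the equivalence with the control metric of $X_1,X_2$ from \cite{NSW}; your alternative of citing \cite{NSW,S} matches the paper exactly.
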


\begin{proof}
It is not difficult to check that $\rho_k$ is equivalent to the metric associated with vector fields $X_1$ and $X_2$, see \cite[Definition 1.1]{NSW}. That is, the distance between $x$ and $y$ is the infimum of travel times between $x$ and $y$ along paths whose velocity vector at any point is of the form $a_1X_1+a_2 X_2$, with $|a_1| \le 1$ and $|a_2|\le 1$.
By \eqref{e3.w5} any ball $B_{\rh_k}(x,\,\delta)$ is actually a rectangle. Hence, $\rho_k$ is quasi-convex with $Q=\sqrt{2}$.
Moreover, for any $\la\ge1,\,\delta>0$ and $x\in\R^2$, we have
\begin{align}\label{e5.1}
 \la \lf(B_{\rh_k}(x,\,\delta)-x\r)&=\lf\{ (y_1,\,y_2):\, |y_1|<\la \delta,\, |y_2|< \la\max\{\delta^{k+1},\,\delta|x_1|^k\}  \r\}\\
 &\subseteq \lf\{ (y_1,\,y_2):\, |y_1|<\la \delta,\, |y_2|<\max\{(\la\delta)^{k+1},\,\la\delta|x_1|^k\}  \r\}\nonumber\\
 &= B_{\rh_k}(x,\,\la\delta)-x.\nonumber
\end{align}
Hence, $\rh_k$ satisfies the inner property as in Definition \ref{d3.yy1} with $a=1$ and $b=1$.
By \eqref{e3.w5} we have
\begin{equation}\label{ml}
|B_{\rh_k}(x,\,\delta)| = 4 \delta^2 \max\{\delta^{k},|x_1|^k \}.
\end{equation}
Hence, the Lebesgue measure is not $1$-Ahlfors-regular with respect to $\rho_k$.
\end{proof}

In spite of Proposition \ref{p3.x5}, one can associate with $\rho_k$ a continuous ellipsoid cover. By the quasi-convexity of $\rho_k$ we can consider family of ellipsoids $\Xi_{\rho_k}=\{\xi_x^r: x\in \R^n, r>0\}$ as in Definition \ref{quasi convex}
\[
\xi^r_x\Sub B_{\rho_k}(x,r) \Sub Q \cdot \xi^r_x.
\]
 For any $x\in \R^n$ and $t\in \R$ define
\begin{equation}\label{mll}
\theta_{x,\,t}= \xi_x^{r(t)}, \qquad\text{where }r(t) =\sup\{r>0: |\xi^r_x| \le 2^{-t}\}.
\end{equation}
It follows from \eqref{e3.w5} and \eqref{mll} that $\Theta=\{\theta_{x,\,t}: x\in \R^n, t\in \R\}$ satisfies property (i) in Definition \ref{d3.x1}. It takes considerably more effort to show that $\Theta$ satisfies the shape condition (ii) using Lemma \ref{geo}. Consequently, the Hardy space $H^p(\R^n,\rho_k)$, which corresponds to ellipsoid cover $\Xi_{\rho_k}$, satisfies the conclusions of Theorem \ref{hardy}. We leave details to an interested reader.


\end{document}